\newcommand\restr[2]
  \DeclareMathOperator{\dom}{dom}
  \DeclareMathOperator{\ord}{ord}
\newtheorem{thm}{Theorem}[section]
\newtheorem{cor}[thm]{Corollary}
\newtheorem{prop}[thm]{Proposition}
\newtheorem{lem}[thm]{Lemma}
\theoremstyle{definition}
\newtheorem{defn}[thm]{Definition}
\newtheorem{exmp}[thm]{Example}
\theoremstyle{remark}
\newtheorem{rem}[thm]{Remark}
\newtheorem{rems}[thm]{Remarks}
\let\c@equation\c@thm
\numberwithin{equation}{section}
\title[Maximal ideals of regulous functions are not finitely generated]{Maximal ideals in the ring of regulous functions are not finitely generated}
\author{Aleksander Czarnecki}
\date{}
\subjclass{Primary 14P10; Secondary 26C15}
\begin{document}
\begin{abstract}

The paper consider regulous functions on the real affine space $\mathbb{R}^N$. We shall study some algebraic properties of the ring of those functions. It is presented a proof of the regulous version of Nullstellensatz based on the substitution property and the Artin-Lang property for the considered function ring. We prove that every maximal ideal in the ring of regulous functions on $\mathbb{R}^N$ when $N\geq 2$ is not finitely generated. Finally, we extend the latter result to an arbitrary, smooth, real affine algebraic variety of dimension $d\geq 2$.

\end{abstract}
\maketitle

\tableofcontents
\linespread{1.3} 
\section{Introduction}
The subject of our work are rational functions admitting a continuous and $\mathcal{C}^{k}$ extension to the real affine space $\mathbb{R}^N$, which are simply called $k\textrm{\hyp{}}$regulous. Section $2$ contains some basic results about $k\textrm{\hyp{}}$regulous functions and sets; for details we refer the reader to the papers [2] and [5]. We shall study some algebraic properties of the ring of those functions. Our work is focused on two purposes.\newline \indent First, in Section $3$, we present a proof of the regulous version of Nullstellensatz relying on the paper [3]. Our approach is based on the substitution property for the ring of regulous functions and on the Artin--Lang property which can be directly deduced from the former. Basically, these two results together with the formal Positivstellensatz allow us to establish Nullstellensatz for the ring of regulous functions. \newline \indent Next, in Section $4$, we will be concerned with lack of noetherianity of the ring  $\mathcal{R}^{k}\left( \mathbb{R}^{N}\right)$ of $k\textrm{\hyp{}}$regulous functions on $\mathbb{R}^N$, $N\geq 2$, which was proven in [2]. It is well known that there exist non-noetherian rings whose all maximal ideals are finitely generated. \newline \indent We shall show that every maximal ideal of $\mathcal{R}^{k}\left( \mathbb{R}^{N}\right)$, $N\geq 2$, is not finitely generated. Then we extend this result to smooth, real affine algebraic varieties of dimension $d\geq 2$. This is a new result which cannot be deduced from the reasonings in the paper [2]. Besides the methods we use are different from those in [2]. \newline \indent We assume some prerequisites of real algebraic geometry, for which we refer the reader to the monography [1]. In particular, we will use the Tarski--Seidenberg principle in many contexts.
\newline \newline \indent We use the following notation:
\newline $\mathbb{N}=\lbrace 0, 1, ...\rbrace$
\newline $\mathbb{N}^{\ast}=\lbrace 1, 2, ...\rbrace$
\newline $O=(0, ...,0) \in \mathbb{R}^{N}$
\newline $\mathcal{Z}(f)$ the zero set of a function $f$
\newline$\mathcal{Z}(E)$ the set of common zeroes of functions from a subset $E$
\newline $\mathcal{I}(F)$ the ideal of functions vanishing on a set $F$
\newline $\sqrt{I}$ the radical of an indeal $I$ 

\section{Regulous functions and their basic properties}
This introductory section presents without proofs some basic results about regulous functions. For details and proofs we refer the reader to [2], [4] and [5].
\newline \newline \indent Let $N\in \mathbb{N}^{\ast}$ be a positive integer, and $f\in \mathbb{R}(x_{1}, ..., x_{N})$ be a rational function on $\mathbb{R}^{N}$. The domain of $f$\ denoted by $\mathrm{dom}(f)$ is by definition the biggest Zariski open subset of $\mathbb{R}^{N}$ on which $f$ is regular. There exist $p,q\in \mathbb{R}[x_{1},...,x_{N}]$, with $q\neq 0$ on $\mathrm{dom}(f)$, such that $f=\dfrac{p}{q}$ on $\mathrm{dom}(f)$ (cf. [2,~Proposition~2.9]). The indeterminacy locus of $f$ is defined to be the Zariski closed set $\mathrm{indet}(f)=\mathbb{R}^{N}\setminus \mathrm{dom}(f)$.  
 \begin{defn}
 Let $N \in\mathbb{N}^{\ast}$, $k\in\mathbb{N}\cup \lbrace\infty\rbrace$. We say that a real valued function $f$ defined on $\mathbb{R}^{N}$ is $k\textrm{\hyp{}}$regulous if the following conditions are satisfied:
 \begin{enumerate}
 \item $f$ is rational on $\mathbb{R}^{N}$
 \item $f:\mathbb{R}^{N}\longrightarrow\mathbb{R}$ is of class $\mathcal{C}^{k}$. 
\end{enumerate} 
The set of $k\textrm{\hyp{}}$regulous functions on $\mathbb{R}^{N}$ has a natural ring structure. We denote this ring by $\mathcal{R}^{k}\left( \mathbb{R}^{N}\right)$.     
\end{defn}
\begin{rems}
Similarly, we can define the ring $\mathcal{R}^{k}\left( X \right)$ of $k\textrm{\hyp{}}$regulous functions on a smooth, real affine algebraic variety $X$ (cf. [2, Definition 2.15]). The following relations are worth noting: $$\mathcal{O}(X)=\mathcal{R}^{\infty}\left( X\right)\subseteq\cdots\subseteq\mathcal{R}^{1}\left( X\right)\subseteq\mathcal{R}^{0}\left( X\right),$$ where $\mathcal{O}(X)$ is the ring of regular functions on $X$ (cf. [2, Theorem 3.3]). We also say that a mapping $f:\mathbb{R}^N\rightarrow \mathbb{R}^M$ is $k\textrm{\hyp{}}$regulous when so are all its coordinate functions. 
\end{rems}

\begin{defn}
Let $f:\mathbb{R}^{N}\rightarrow\mathbb{R}$ be a real function and $k\in \mathbb{N}$ be an integer. We say that $f$ is $k\textrm{\hyp{}}$flat at a point $a\in\mathbb{R}^{N}$ if $f$ is of class $\mathcal{C}^{k}$ near $a$ and all partial derivatives of $f$ of order $\leq k$ are zero at $a$. We say that $f$ is $k\textrm{\hyp{}}$flat if $f$ is $k\textrm{\hyp{}}$flat at any point of its zero set. 
\end{defn}

\begin{exmp}
For $(k,N)\in \mathbb{N}\times\mathbb{N}^{*}$ define on $\mathbb{R}^{N}$ a real function $f=\dfrac{x_{1}^{3+k}}{x_{1}^{2}+...+x_{N}^{2}}$. Then $f$ is $k\textrm{\hyp{}}$regulous function on $\mathbb{R}^{N}$ and $k\textrm{\hyp{}}$flat at $O$. This can be checked by routine calculation. 
\end{exmp}
The next proposition says that the indeterminacy locus of a regulous function $f$ cannot be too wide.
\begin{prop}[[2, Proposition 3.5$\rbrack$] 
Let $f\in \mathcal{R}^{k}\left( \mathbb{R}^{N}\right)$, $f(x)=\dfrac{p(x)}{q(x)}$, for $x\in\mathrm{dom}(f)$ where $p$, $q$ are suitable relatively prime polynomials. Then $\mathcal{Z}(q)\subseteq\mathcal{Z}(p)$ and $\mathrm{codim}_{\mathbb{R}^{N}}\mathcal{Z}(q)\geq2$.
\end{prop}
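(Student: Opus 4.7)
For the containment $\mathcal{Z}(q) \subseteq \mathcal{Z}(p)$, my plan is to exploit the fact that a continuous function on $\mathbb{R}^N$ is locally bounded and therefore cannot have poles. Suppose toward a contradiction that $x_0 \in \mathcal{Z}(q)$ but $p(x_0) \neq 0$. Continuity of $p$ furnishes a ball $B$ around $x_0$ on which $|p| \geq c > 0$. Since $q$ is a nonzero polynomial, $\mathcal{Z}(q)$ is nowhere dense in $\mathbb{R}^N$, so $B \setminus \mathcal{Z}(q) \subseteq \mathrm{dom}(f)$ is dense in $B$; any sequence $x_n \in B \setminus \mathcal{Z}(q)$ with $x_n \to x_0$ then gives $|f(x_n)| = |p(x_n)|/|q(x_n)| \geq c/|q(x_n)| \to \infty$, contradicting continuity of $f$ at $x_0$.

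For $\mathrm{codim}_{\mathbb{R}^N} \mathcal{Z}(q) \geq 2$ I would argue again by contradiction. Factor $q = q_1^{a_1} \cdots q_r^{a_r}$ into irreducibles in $\mathbb{R}[x_1, \ldots, x_N]$ and suppose some $\mathcal{Z}(q_i)$, relabelled as $\mathcal{Z}(q_1)$, has codimension $1$, i.e.\ real dimension $N-1$. The first part of the proposition yields $p \equiv 0$ on $\mathcal{Z}(q_1)$. Because $\mathcal{Z}(q_1)$ attains its maximal possible real dimension, its singular locus is a proper real algebraic subset of strictly smaller dimension, so I may choose $x_0 \in \mathcal{Z}(q_1)$ with $\nabla q_1(x_0) \neq 0$. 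The implicit function theorem presents $\mathcal{Z}(q_1)$ near $x_0$ as a smooth real hypersurface, transverse to some coordinate axis $x_N$ after relabelling. Dividing $p$ by $q_1$ in $\mathbb{R}(x_1, \ldots, x_{N-1})[x_N]$ and clearing denominators produces $A \cdot p = q_1 \cdot s + r$ with $A \in \mathbb{R}[x_1, \ldots, x_{N-1}] \setminus \{0\}$ and $\deg_{x_N} r < \deg_{x_N} q_1$; the remainder $r$ vanishes on a full-dimensional open piece of $\mathcal{Z}(q_1)$ while having lower degree in $x_N$, which forces $r \equiv 0$, hence $q_1 \mid p$. This contradicts $\gcd(p, q) = 1$.

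The main obstacle is this last step, upgrading the analytic vanishing of $p$ on $\mathcal{Z}(q_1)$ to the algebraic divisibility $q_1 \mid p$. The codimension-$1$ hypothesis does the heavy lifting: an $\mathbb{R}$-irreducible polynomial $q_1$ admitting a conjugate factorization $q_1 = g\bar{g}$ with $g \in \mathbb{C}[x] \setminus \mathbb{R}[x]$ would cut out its real zero locus as $\{\mathrm{Re}(g) = 0\} \cap \{\mathrm{Im}(g) = 0\}$, where $\mathrm{Re}(g)$ and $\mathrm{Im}(g)$ are coprime (any common real factor would divide $g$ itself, forcing $g \in \mathbb{R}[x]$), so the real zero locus would have codimension at least $2$. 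Hence the factor $q_1$ singled out above is automatically absolutely irreducible, which legitimises the polynomial division step and, equivalently, allows invoking Hilbert's Nullstellensatz on the complexification after verifying that $\mathcal{Z}_{\mathbb{R}}(q_1)$ is Zariski dense in $\mathcal{Z}_{\mathbb{C}}(q_1)$.
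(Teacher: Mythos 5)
The paper itself gives no proof of this statement --- it is quoted from [2, Proposition 3.5] in the expository Section 2 --- so your argument can only be judged on its own merits. Your proof of $\mathcal{Z}(q)\subseteq\mathcal{Z}(p)$ is correct and is the standard one: coprimality of $p$ and $q$ guarantees $\mathbb{R}^{N}\setminus\mathcal{Z}(q)\subseteq\dom(f)$, and the blow-up of $|p|/|q|$ along a sequence tending to a point of $\mathcal{Z}(q)\setminus\mathcal{Z}(p)$ contradicts continuity of $f$.

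The codimension statement is where the real work lies, and your argument for it has a genuine gap at its central step, the passage from ``$p$ vanishes on $\mathcal{Z}(q_{1})$'' to ``$q_{1}\mid p$''. You flag this yourself, but neither of your two routes closes it. In the division route, the assertion that $r$ ``vanishes on a full-dimensional open piece of $\mathcal{Z}(q_{1})$ while having lower degree in $x_{N}$, which forces $r\equiv 0$'' is the vanishing-implies-divisibility problem restated: a polynomial of low $x_{N}$-degree vanishing on an open piece of a graph is not formally zero, and you need an actual mechanism --- for instance, B\'ezout in the principal ideal domain $\mathbb{R}(x_{1},\dots,x_{N-1})[x_{N}]$ (if $q_{1}\nmid p$ there, coprimality yields $Uq_{1}+Vp=D$ with $0\neq D\in\mathbb{R}[x_{1},\dots,x_{N-1}]$, and $D$ would have to vanish on the open image of the graph under projection, a contradiction; Gauss's lemma then transfers $q_{1}\mid p$ back to $\mathbb{R}[x_{1},\dots,x_{N}]$), or a resultant computation. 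In the Nullstellensatz route you correctly establish that $q_{1}$ is absolutely irreducible (though a nonconstant common real factor $h$ of $\mathrm{Re}(g)$ and $\mathrm{Im}(g)$ does not force $g\in\mathbb{R}[x]$; it forces $h^{2}\mid q_{1}$, which is the contradiction you actually want), but you then explicitly defer ``verifying that $\mathcal{Z}_{\mathbb{R}}(q_{1})$ is Zariski dense in $\mathcal{Z}_{\mathbb{C}}(q_{1})$'' --- and that verification is precisely the remaining content. A secondary gap: your choice of $x_{0}$ with $\nabla q_{1}(x_{0})\neq 0$ is justified by the lower dimension of the singular locus of $\mathcal{Z}(q_{1})$, but nonsingularity of a point of a real algebraic set is defined through the full ideal $\mathcal{I}(\mathcal{Z}(q_{1}))$, and it yields a point where $\nabla q_{1}\neq 0$ only once one knows $\mathcal{I}(\mathcal{Z}(q_{1}))=(q_{1})$; that identity is [1, Theorem 4.5.1] (the result this paper itself invokes for such dimension statements in Lemma 4.2) and is essentially equivalent to what you are trying to prove, so the step as written is circular. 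Citing [1, Theorem 4.5.1] there, or running the B\'ezout/resultant argument directly on a graph piece of $\mathcal{Z}(q_{1})$ supplied by semialgebraic stratification, would repair the proof.
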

Now we recall a crucial theorem on existence of regular stratification from which many important properties of regulous functions follow (cf. [2, Theorem 4.1], [5, Corollary 11.5] and [4, Proposition 8 ff.]).
\begin{thm}
Consider a $k\textrm{\hyp{}}$regulous function $f$ on $\mathbb{R}^{N}$. Then, there exist a finite stratification of $\mathbb{R}^{N}
 $ $$\mathbb{R}^{N}=\coprod_{i=1}^{m} S_{i}$$
where $S_{i}$ are Zariski locally closed subsets such that all restrictions $f\vert_{S_{i}}$ are regular functions.
\end{thm}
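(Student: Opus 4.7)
The plan is to proceed by noetherian induction on the dimension of the Zariski closed locus on which $f$ has yet to be exhibited as regular. The strategy gives one stratum per ``level'' by splitting off the open subset where regularity already holds and then restricting to what remains.

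At the top level, set $S_{1}=\mathrm{dom}(f)$, which is Zariski open in $\mathbb{R}^{N}$ and hence locally closed; on $S_{1}$, $f$ is regular by the very definition of the domain. The complement $Z_{1}=\mathrm{indet}(f)$ is a Zariski closed proper subset, which by Proposition~2.4 has $\mathrm{codim}_{\mathbb{R}^{N}}Z_{1}\geq 2$. In particular $\dim Z_{1}<N$, so the induction has a chance to work.

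For the inductive step, I would invoke the substitution property for regulous functions (or, equivalently, the hereditary rationality of continuous rational functions in the sense of Koll\'ar): for any real algebraic subvariety $Y\subseteq\mathbb{R}^{N}$, the restriction $f\vert_{Y}$ is again regulous on $Y$, with a well-defined notion of domain $\mathrm{dom}(f\vert_{Y})\subseteq Y$ that is Zariski open in $Y$ and on which $f\vert_{Y}$ is regular. Apply this with $Y$ running through the irreducible components of $Z_{1}$ (in fact, through suitable locally closed refinements such as smooth loci of irreducible components and their iterated singular loci, each of which is Zariski locally closed in $\mathbb{R}^{N}$). On each such $Y$, pick the open stratum $\mathrm{dom}(f\vert_{Y})$ as a new stratum $S_{j}$, and continue the construction on $Y\setminus \mathrm{dom}(f\vert_{Y})$, which is Zariski closed in $Y$ and of strictly smaller dimension than $Y$.

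At every stage the remaining unprocessed locus is a Zariski closed set of strictly smaller dimension than the previous one, and each irreducible variety has only finitely many irreducible components and a proper singular locus. Hence after finitely many steps the procedure exhausts $\mathbb{R}^{N}$, producing the desired finite decomposition $\mathbb{R}^{N}=\coprod_{i=1}^{m}S_{i}$ into Zariski locally closed strata on which $f$ restricts to a regular function.

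The main obstacle is the inductive step, namely the hereditary rationality assertion that $f\vert_{Y}$ is again regulous (in particular, \emph{rational} as a function on $Y$) for every real algebraic subvariety $Y$. A priori, the restriction of a rational function to a subvariety need not be rational on that subvariety; the reason it is so here is essentially the content of the substitution property that will be developed in Section~3. Once this input is available, the dimension bookkeeping above is straightforward noetherian induction.
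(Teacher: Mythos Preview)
The paper does not supply its own proof of this theorem: it sits in the preliminary Section~2, which explicitly presents results ``without proofs'' and defers to [2], [4] and [5]. So there is no in-paper argument to compare against directly. Your outline---peel off the Zariski open domain $\mathrm{dom}(f)$, then recurse on the closed indeterminacy locus by noetherian induction on dimension---is indeed the route taken in those references, particularly in Koll\'ar--Nowak~[4]. The decisive ingredient, as you correctly isolate, is hereditary rationality: the restriction of a regulous $f$ to any Zariski closed subvariety $Y$ is again rational on $Y$, so that $\mathrm{dom}(f|_{Y})$ makes sense as a Zariski open subset of $Y$ on which $f|_{Y}$ is regular.

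There is, however, a circularity in your appeal to ``the substitution property that will be developed in Section~3.'' Proposition~3.10 of the paper (the substitution property for $\mathbb{R}$-algebra homomorphisms into real closed fields) is \emph{proved using Theorem~2.6}: its very first step invokes the stratification $\mathbb{R}^{N}=S_{0}\cup\dots\cup S_{m}$ with $f$ regular on each $S_{i}$. Moreover, that proposition is not the same statement as hereditary rationality; it concerns evaluation homomorphisms, not restrictions to subvarieties, and the two are not interchangeable here. What your induction actually needs is the independent result that a continuous rational function restricted to a subvariety remains rational on it. This is established in [4, Proposition~8] by analysing the Zariski closure of the graph of $f$, and in [2] by resolution/blow-up arguments---methods entirely separate from Section~3. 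Once you take that as input rather than trying to extract it from Section~3, the dimension-drop bookkeeping you describe goes through.

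A minor correction: the codimension bound on the indeterminacy locus is Proposition~2.5 in the paper, not~2.4.
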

\begin{rem}
From the above theorem we can in particular deduce that the composition of $k\textrm{\hyp{}}$regulous functions remains $k\textrm{\hyp{}}$regulous. Namely, let $n,m,l\in\mathbb{N}^{\ast}$ and $f:\mathbb{R}^{n}\rightarrow\mathbb{R}^{m}$, $g:\mathbb{R}^{m}\rightarrow\mathbb{R}^{l}$ be $k\textrm{\hyp{}}$regulous functions. Then the function $g\circ f:\mathbb{R}^{n}\rightarrow\mathbb{R}^{l}$ is also $k\textrm{\hyp{}}$regulous (cf. [2, Corollary 4.14] and [5, Corollary 11.7]). 
\end{rem}
In the next section we will also need the following variant of the \L{}ojasiewicz property.
\begin{prop}[[2, Lemma 5.1$\rbrack$ and [5, Corollary 12.2 $\rbrack$]
Let $(k,N)\in \mathbb{N}\times\mathbb{N}^{*}$, $f, g\in \mathcal{R}^{k}\left( \mathbb{R}^{N}\right)$. Suppose that $\mathcal{Z}(g)\subseteq \mathcal{Z}(f)$. Then there exist natural $M$, and a $k\textrm{\hyp{}}$regulous function $h$, such that $$f^{M}=gh.$$
\end{prop}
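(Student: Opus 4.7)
The natural candidate is $h=f^M/g$, regarded as a rational function in $\mathbb{R}(x_1,\dots,x_N)$. The plan is to argue that for $M$ large enough this rational function extends to a genuine $k$-regulous function on $\mathbb{R}^N$, and that the identity $f^M=gh$ then holds everywhere by density. On the Zariski open set $\{g\neq 0\}$ there is nothing to do: both $f$ and $g$ are of class $\mathcal{C}^k$ there, and $g$ does not vanish, so $f^M/g$ is automatically $\mathcal{C}^k$. The real content is the continuous and $\mathcal{C}^k$ extension of $h$ across $\mathcal{Z}(g)\subseteq \mathcal{Z}(f)$ with value $0$.

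To handle this, I would apply the regulous stratification theorem (Theorem 2.5) simultaneously to $f$ and $g$: by taking a common refinement one obtains a finite stratification $\mathbb{R}^N=\coprod_{i=1}^m S_i$ into Zariski locally closed sets on which both $f|_{S_i}$ and $g|_{S_i}$ are regular in the algebraic sense. The hypothesis passes to each stratum, giving $\mathcal{Z}(g|_{S_i})\subseteq \mathcal{Z}(f|_{S_i})$. On the pair $(f|_{S_i},g|_{S_i})$ one can now invoke the classical semi-algebraic \L{}ojasiewicz inequality to produce, on each compact piece, an exponent $M_i$ and a constant $C_i$ with $|f|^{M_i}\leq C_i|g|$. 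Taking $M$ greater than all the $M_i$ then controls $|h|$ uniformly on compacta, which yields a continuous extension by $0$ across $\mathcal{Z}(g)$.

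The main obstacle, in my view, is upgrading continuity to class $\mathcal{C}^k$ globally. Two issues arise. First, one needs estimates not merely on $|f|^M/|g|$ but on its partial derivatives up to order $k$; this is where the $k$-flatness of $f^M$ along $\mathcal{Z}(f)$ for large $M$ enters, combined with higher order versions of the \L{}ojasiewicz inequality in the semi-algebraic setting. Second, the $\mathcal{C}^k$ property must survive at points of $\mathcal{Z}(g)$ where several strata accumulate, so one needs a patching argument that is compatible with the Zariski locally closed stratification; this is the delicate step and is essentially the reason why the proof in [2, Lemma 5.1] and [5, Corollary 12.2] relies on the rather strong structural results for regulous functions recalled in Section 2. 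Once $h$ is shown to be rational and $\mathcal{C}^k$, hence $k$-regulous, the equality $f^M=gh$ holds on $\{g\neq 0\}$ and extends to all of $\mathbb{R}^N$ by continuity.
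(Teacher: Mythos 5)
The paper itself gives no proof of this proposition: Section~2 is explicitly a list of facts recalled \emph{without proof} from [2, Lemma 5.1] and [5, Corollary 12.2], so there is no internal argument to compare yours against; your attempt has to stand on its own, and it does not. Even for the continuity part ($k=0$) there are two soft spots. The \L{}ojasiewicz inequality invoked ``on each compact piece'' produces an exponent and a constant depending on the compact set; the strata $S_i$ are not compact and $\mathbb{R}^{N}$ is covered by infinitely many compacta, so to get a single $M$ valid globally you must use the global semi-algebraic version of the inequality (as in [1, Ch.~2]), not the compact one. And boundedness of $|f|^{M}/|g|$ does not by itself yield an extension by $0$; you need to keep the strict excess $M>M_i$ and write $|f|^{M}/|g|\leq C_i\,|f|^{M-M_i}$, which tends to $0$ near $\mathcal{Z}(g)$ precisely because $\mathcal{Z}(g)\subseteq\mathcal{Z}(f)$. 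These are repairable, but as written the continuity claim is not closed.

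The genuine gap is the case $k\geq 1$: you never prove that $h=f^{M}/g$ is of class $\mathcal{C}^{k}$ at points of $\mathcal{Z}(g)$. You correctly identify this as the main obstacle (derivative estimates up to order $k$, and the fact that continuity of the formal quotient-rule expressions at points where several strata accumulate does not automatically make them the actual derivatives of $h$ --- a Whitney/l'H\^opital-type argument is needed there), but you then defer it to ``the structural results recalled in Section 2,'' which is close to circular, since the statement you are asked to prove \emph{is} one of those recalled structural results, and the only other one (the stratification theorem) does not by itself give derivative control transverse to the strata. A workable route is to take $M$ large enough that every term produced by differentiating $f^{M}/g$ up to order $k$ still carries a factor $f^{M'}$ with $M'$ exceeding the \L{}ojasiewicz exponents of the relevant quotients (using that partial derivatives of $k\textrm{\hyp{}}$regulous functions are $(k-1)\textrm{\hyp{}}$regulous), and then verify the differentiability across $\mathcal{Z}(g)$ by an induction on $k$; but none of this is carried out in your proposal, so the statement as claimed is not established.
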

Let us remind briefly the notion of $k\textrm{\hyp{}}$regulous topology. Just like in the construction of the Zariski topology, where we use as closed sets the sets of common zeroes of polynomials to obtain a well defined topology, we can do the same by replacing polynomials with $k\textrm{\hyp{}}$regulous functions to obtain again a well defined topology in the similar manner.      
\begin{defn}
 We say that subset $F\subseteq\mathbb{R}^{N}$ is $k\textrm{\hyp{}}$regulous closed if there exists a subset $E\subseteq\mathcal{R}^{k}\left( \mathbb{R}^{N}\right)$ such that:$$\mathcal{Z}(E)=F $$
We define a $k\textrm{\hyp{}}$regulous topology as a topology induced by family of $k\textrm{\hyp{}}$regulous closed sets.
\end{defn}

Recall that a topological space is noetherian if the closed subsets satisfy the descending chain condition: any descending chain of closed sets $$F_{1} \supseteq F_{2} \supseteq ... $$ stabilizes, i.e. $F_{m}=F_{m+1}=...$ for an integer $m$. \newline \newline 
 It can be proven, by means of regular stratification (Theorem 2.6), that the $k\textrm{\hyp{}}$regulous topology is noetherian (cf. [2, Theorem 4.3], [5, Proposition 11.10]). \newline \indent Actually even more is true. Recall that a subset $F \subseteq \mathbb{R}^{N}$ is constructible if it is a (finite) boolean combination of Zariski closed subsets of $\mathbb{R}^{N}$. Then the family of all closed and constructible subsets of $\mathbb{R}^{N}$ is the family of closed sets for a topology which we call the constructible topology on $\mathbb{R}^{N}$. Furthermore, the constructible topology is noetherian (cf. [5, Proposition 11.1]).  
It has been proven that for every $k\in \mathbb{N}$, the $k\textrm{\hyp{}}$regulous topology coincides with the constructible topology on $\mathbb{R}^N$ (cf.~[2,~Theorem 6.4]).   
 Hence, it is not necessary to specify the integer $k$ to define the regulous topology on $\mathbb{R}^N$. In particular it follows that the regulous topology, being exactly the constructible topology on $\mathbb{R}^N$ is automatically noetherian.  

\section{Nullstellensatz in the ring  $\displaystyle \mathcal{R}^{k}\left( \mathbb{R}^{N}\right)$ via Artin--Lang property}
In this section we present a proof of the regulous version of Nullstellensatz based on the paper [3]. We start with preliminaries from real algebra following the monography [1]. Later in this section we will also use the Tarski--Seidenberg principle, we refer the reader to [1,~chapters~1,~4 and 5].
\newline \newline \indent From now on $A$ will be an unitary commutative ring. An ideal $I$ in $A$ is said to be real if for any elements $a_{1}, ..., a_{n} \in A $ the following condition is satisfied: $a_{1}, ..., a_{n} \in I $ whenever $\displaystyle \sum_{i=1}^{n}a_{i}^2 \in I$. The real radical of the ideal $I$ is the set 

$$\sqrt[r]{I}=\left \{ a\in A \mid \exists m\in \mathbb{N},~\exists b_{1}, ..., b_{n} \in A,~a^{2m}+b_{1}^{2}+...+b_{n}^{2}\in I    \right \}.$$

\begin{rem}
By [1, Proposition 4.1.7], $\sqrt[r]{I}$ is the intersection of all real prime ideals of $A$ containing $I$. We also have  $I\subseteq\sqrt{I}\subseteq\sqrt[r]{I}$ with equality if and only if $I$ is a real (hence radical, cf. [1, Lemma 4.1.5]) ideal. 
\end{rem}
\begin{defn}
We say that a subset $\alpha \subseteq A$ is a cone if following conditions are satisfied:
\begin{enumerate}[i)]
\item $a,b\in \alpha\Rightarrow a+b\in \alpha$
\item $a,b\in \alpha\Rightarrow ab\in \alpha$
\item $a\in A\Rightarrow a^{2}\in \alpha$
\newline The cone $\alpha$ is said to be proper if in addition
\item $-1\notin \alpha$
\newline The proper cone is said to be prime if additionaly
\item $ab\in \alpha\Rightarrow$ $(a\in \alpha$ or $-b\in \alpha)$
\end{enumerate}
\end{defn}
The set of sums of squares of elements of $A$ is the smalest cone of $A$. We denote this set as $\sum A^{2}$. The positive cone of an ordered field $(F,\leq)$ is the cone $\alpha=\lbrace a\in F \mid a\geq 0\rbrace$.
\newline \indent Let $\alpha$ be a prime cone of $A$ and $-\alpha=\lbrace a\in A \mid -a\in \alpha  \rbrace$. Then $\alpha\cup-\alpha=A$ and $\alpha\cap-\alpha$ is a prime ideal of $A$, called the support $\mathrm{supp}(\alpha)$ of $A$. \newline \newline The following proposition gives a one to one correspondence between the orderings and the prime cones of a ring $A$.
\begin{prop}[[1, Proposition 4.3.4$\rbrack$]
A subset $\alpha \subseteq A$ is a prime cone of $A$ if and only if there exists an ordered field $(F,\leq)$ and a homomorphism $\varphi:A\rightarrow F$, such that $$\alpha=\lbrace a\in A \mid \varphi(a)\geq 0 \rbrace.$$
\end{prop}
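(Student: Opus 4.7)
The plan is to prove the two directions separately. For ($\Leftarrow$), assume $\alpha=\varphi^{-1}(\{x\in F\mid x\geq 0\})$ for some homomorphism $\varphi:A\to F$ into an ordered field. I would verify the five prime-cone axioms one by one, each essentially a one-line computation: closure under sum and product comes from $\varphi$ being a ring map and $F_{\geq 0}$ being closed under sum and product; $a^{2}\in\alpha$ because $\varphi(a)^{2}\geq 0$; properness follows from $\varphi(-1)=-1<0$; and axiom (v) reduces to the dichotomy in an ordered field that a product is non-negative iff its factors have the same sign.

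The substantive direction is ($\Rightarrow$). Given a prime cone $\alpha$, I would first record two facts already noted in the paragraph preceding the proposition: that $\alpha\cup-\alpha=A$ (apply (v) to $a\cdot a=a^{2}\in\alpha$), and that the support $\mathfrak{p}:=\alpha\cap-\alpha$ is a prime ideal of $A$. Passing to the quotient $\bar A:=A/\mathfrak{p}$, which is an integral domain, and setting $\bar\alpha$ to be the image of $\alpha$, these two identities translate into $\bar\alpha\cup-\bar\alpha=\bar A$ and $\bar\alpha\cap-\bar\alpha=\{0\}$. Together with closure under sum and product, this makes $\bar\alpha$ a total positive cone on $\bar A$, turning $\bar A$ into a linearly ordered integral domain via $\bar a\leq\bar b\Leftrightarrow \bar b-\bar a\in\bar\alpha$.

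The final step is to pass to the fraction field $F:=\mathrm{Frac}(\bar A)$, extending the order by declaring $\bar a/\bar b\geq 0 \Leftrightarrow \bar a\bar b\in\bar\alpha$, and to take $\varphi$ to be the canonical composition $A\twoheadrightarrow \bar A\hookrightarrow F$. By construction, $\varphi(a)\geq 0$ is equivalent to $\bar a\in\bar\alpha$, which since $\mathfrak{p}\subseteq\alpha$ is equivalent to $a\in\alpha+\mathfrak{p}=\alpha$, as required. The main technical point requiring care — the step I expect to write out most carefully — is the well-definedness on equivalence classes of fractions of the relation $\bar a/\bar b\geq 0$, together with verification of the ordered-field axioms on $F$; this reduces to the fact that $\bar\alpha$ is a prime cone in the integral domain $\bar A$, where axiom (v) supplies exactly the sign dichotomy needed to handle products and inverses of non-zero elements.
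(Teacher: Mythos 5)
Your proposal is correct. The paper itself gives no proof of this statement --- it is quoted directly from [1, Proposition 4.3.4] --- but your argument is the standard one, passing to the quotient by the support $\mathrm{supp}(\alpha)=\alpha\cap-\alpha$ and ordering its fraction field via $\bar a/\bar b\geq 0\Leftrightarrow ab\in\alpha$, which is precisely the machinery the paper recapitulates in Lemma 3.4 and Proposition 3.6.
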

\begin{lem}[[1, Lemma 4.3.5$\rbrack$]
For a prime cone $\alpha$ of $A$, denote by $k(\mathrm{supp}(\alpha))$ the residue field of A at $\mathrm{supp}(\alpha)$, i.e. the field of fractions of $A/\mathrm{supp}(\alpha)$. Then $$\overline{\alpha}=\left \{ \dfrac{\overline{a}}{\overline{b}}\in k(\mathrm{supp}(\alpha)) \mid ab \in \alpha \right \}$$ is the positive cone of an ordering of $k(\mathrm{supp}(\alpha))$, and $\alpha$ is the inverse image of $\overline{\alpha}$ under the canonical homomorphism $A \rightarrow k(\mathrm{supp}(\alpha))$. It follows, in particular, that $\mathrm{supp}(\alpha)$ is a real prime ideal.  
\end{lem}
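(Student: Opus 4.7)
The plan is to verify, in order: (i) $\overline{\alpha}$ is a well-defined subset of the residue field $k(\mathrm{supp}(\alpha))$; (ii) $\overline{\alpha}$ is the positive cone of an ordering on that field; (iii) $\alpha$ is the preimage of $\overline{\alpha}$ under the canonical map $\pi : A \to k(\mathrm{supp}(\alpha))$; (iv) $\mathrm{supp}(\alpha)$ is real, which then falls out as a corollary of (ii) and (iii).

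For (i), I would suppose $\overline{a}/\overline{b} = \overline{a'}/\overline{b'}$ with $b, b' \notin \mathrm{supp}(\alpha)$, i.e.\ $ab' - a'b \in \mathrm{supp}(\alpha)$, and show that $ab \in \alpha$ forces $a'b' \in \alpha$. The key identity is
\[
a'b' \cdot b^{2} \;=\; (ab)(b')^{2} \;-\; (ab' - a'b)(bb').
\]
The first summand is in $\alpha$ by multiplicative closure of $\alpha$ together with the fact that $(b')^{2}$ is a square, while the second lies in $\mathrm{supp}(\alpha) \subseteq \alpha$, so $a'b' b^{2} \in \alpha$. The prime-cone axiom then gives $a'b' \in \alpha$ or $-b^{2} \in \alpha$; the latter would force $b^{2}$, and hence $b$, into the prime ideal $\mathrm{supp}(\alpha)$, contradicting $\overline{b} \neq 0$. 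This is the only place where the ``prime'' part of the prime-cone axiom is used in an essential algebraic manipulation, and it is the step I expect to be the main obstacle.

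For (ii), multiplicative closure of $\overline{\alpha}$ is immediate after clearing denominators. Additive closure uses $(a/b) + (c/d) = (ad+bc)/(bd)$ with $(ad+bc)(bd) = (ab)d^{2} + (cd)b^{2} \in \alpha$ whenever $ab, cd \in \alpha$. Squares land in $\overline{\alpha}$ since $(ab)\cdot(ab) = (ab)^{2} \in \alpha$. Properness $-1 \notin \overline{\alpha}$ would fail only if $-1 \in \alpha$, contradicting properness of $\alpha$. Totality $\overline{\alpha} \cup (-\overline{\alpha}) = k(\mathrm{supp}(\alpha))$ follows from $\alpha \cup (-\alpha) = A$. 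Finally, $\overline{\alpha} \cap (-\overline{\alpha}) = \{0\}$: if both $ab$ and $-ab$ lie in $\alpha$, then $ab \in \mathrm{supp}(\alpha)$, and primality of the ideal $\mathrm{supp}(\alpha)$ together with $b \notin \mathrm{supp}(\alpha)$ forces $a \in \mathrm{supp}(\alpha)$, i.e.\ $\overline{a}/\overline{b} = 0$.

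For (iii), any $a \in A$ maps to $\pi(a) = \overline{a}/\overline{1}$, and by definition this lies in $\overline{\alpha}$ precisely when $a \cdot 1 = a \in \alpha$, giving $\pi^{-1}(\overline{\alpha}) = \alpha$. For (iv), if $\sum a_i^{2} \in \mathrm{supp}(\alpha)$, then $\sum \pi(a_i)^{2} = 0$ in the ordered field $k(\mathrm{supp}(\alpha))$; since in any ordered field a sum of squares can be zero only if each summand is zero, every $\pi(a_i) = 0$, so $a_i \in \mathrm{supp}(\alpha)$. Hence $\mathrm{supp}(\alpha)$ is real, completing the proof.
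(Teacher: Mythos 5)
The paper does not prove this lemma at all --- it is quoted verbatim from [1, Lemma 4.3.5] and used as a black box. Your proposal supplies the standard direct verification and it is correct: the identity $a'b'b^{2}=(ab)(b')^{2}-(ab'-a'b)(bb')$ does establish well-definedness (using that $\mathrm{supp}(\alpha)$ is an ideal and the prime-cone axiom to discard the alternative $-b^{2}\in\alpha$), and the remaining checks of the ordering axioms, the preimage statement, and the realness of $\mathrm{supp}(\alpha)$ via ``a sum of squares in an ordered field vanishes only if each term does'' are all sound. This is essentially the argument found in [1], so there is nothing to flag.
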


\begin{thm}[[1, Theorem 4.3.7$\rbrack$]
The following conditions are equivalent:
\begin{enumerate}[i)]
\item The ring $A$ has a proper cone
\item The ring $A$ has a prime cone
\item There is a homomorphism $\varphi:A\rightarrow K$, where $K$ is a real closed field
\item The ring $A$ has a real prime ideal
\item The element $-1$ is not a sum of squares in $A$, i.e. $-1\notin \sum A^{2}$.

\end{enumerate}
\end{thm}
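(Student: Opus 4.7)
The plan is to establish the equivalence by the cyclic chain (v) $\Rightarrow$ (i) $\Rightarrow$ (ii) $\Rightarrow$ (iii) $\Rightarrow$ (iv) $\Rightarrow$ (v), of which only one step is genuinely delicate.

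Three of the implications are essentially formal. For (v) $\Rightarrow$ (i), the set $\sum A^{2}$ satisfies conditions (i)--(iii) of Definition 3.2 by construction, and is proper exactly when $-1\notin\sum A^{2}$. For (iv) $\Rightarrow$ (v), if $-1=\sum_{i=1}^{n}a_{i}^{2}$ then $1^{2}+\sum_{i=1}^{n}a_{i}^{2}=0$ lies in every real prime ideal $\mathfrak{p}$, so realness forces $1\in\mathfrak{p}$, contradicting $\mathfrak{p}\neq A$. For (iii) $\Rightarrow$ (iv), setting $\mathfrak{p}=\ker\varphi$, the quotient $A/\mathfrak{p}$ embeds into the formally real field $K$; since any sum of squares vanishing in an ordered field has each summand zero, $\mathfrak{p}$ is real, and it is prime because $A/\mathfrak{p}$ injects into a field. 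For (ii) $\Rightarrow$ (iii), Lemma 3.4 already furnishes the ordering $\overline{\alpha}$ on $k(\mathrm{supp}(\alpha))$; taking its real closure $K$, the composition $A\twoheadrightarrow A/\mathrm{supp}(\alpha)\hookrightarrow k(\mathrm{supp}(\alpha))\hookrightarrow K$ is a homomorphism into a real closed field.

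The main obstacle is (i) $\Rightarrow$ (ii), and here I would invoke Zorn's lemma. Given a proper cone $\alpha_{0}$, order the proper cones containing $\alpha_{0}$ by inclusion; the hypothesis of Zorn is verified because the union of a chain of proper cones is again a cone, and remains proper since $-1$ lies in none of them. Let $\alpha$ be a maximal element. To prove $\alpha$ is prime, assume for contradiction that $ab\in\alpha$ but $a\notin\alpha$ and $-b\notin\alpha$. The set $\alpha+a\alpha=\{u+av:u,v\in\alpha\}$ is a cone properly containing $\alpha$ (closure under multiplication uses $a^{2}\in\alpha$), so by maximality it is improper: there exist $u_{1},v_{1}\in\alpha$ with $-1=u_{1}+av_{1}$. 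Applying the same argument to $\alpha-b\alpha$ produces $u_{2},v_{2}\in\alpha$ with $-1=u_{2}-bv_{2}$. Then $av_{1}=-(1+u_{1})$ and $bv_{2}=1+u_{2}$, so multiplying gives $abv_{1}v_{2}=-(1+u_{1})(1+u_{2})=-1-u_{1}-u_{2}-u_{1}u_{2}$, which rearranges to
$$-1\;=\;abv_{1}v_{2}+u_{1}+u_{2}+u_{1}u_{2}.$$
The right-hand side lies in $\alpha$ because $ab\in\alpha$ and $\alpha$ is closed under sums and products, contradicting the properness of $\alpha$. This multiplicative combination of the two maximality witnesses is the only real content of the theorem; the remaining verifications reduce to checking the cone axioms.
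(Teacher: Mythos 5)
Your proof is correct; the paper itself gives no argument for this statement, quoting it directly from [1, Theorem 4.3.7], and your chain of implications is essentially the standard proof found there. In particular, the Zorn's lemma step with the identity $-1 = abv_{1}v_{2}+u_{1}+u_{2}+u_{1}u_{2}$ showing that a maximal proper cone satisfies the primeness condition is exactly the classical argument, and the remaining implications are the routine verifications you describe.
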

Now we show a one to one correspondence  between the orderings of residue field, the prime cones and the homomorphisms to a real closed field. This makes it possible to consider the real spectrum of a ring in the three equivalent ways, which will be useful later.   
\newline \newline \indent The prime cone $\alpha$ induces an ordering $\leq_{\alpha}$ of the residue field $k(\mathrm{supp}(\alpha))$. This ordering is defined by $0\leq_{\alpha} \overline{a}\Leftrightarrow a\in \alpha$, for every $a\in A$ (where $\overline{a}$ denotes the class of $a$ in $k(\mathrm{supp}(\alpha))$). We denote by $k(\alpha)$ the  real closure of the ordered field $(k(\mathrm{supp}(\alpha)), \leq_{\alpha})$. For $a\in A$, $a(\alpha)$ denotes the image of $a$ by the canonical homomorphism from $A$ into $k(\alpha)$: 
$$A\rightarrow A/\mathrm{supp}(\alpha)\rightarrow k(\mathrm{supp}(\alpha))\rightarrow k(\alpha).$$
If the ring $A$ is an $\mathbb{R}\textrm{\hyp{}}$algebra, then the above canonical homomorphism is an $\mathbb{R}\textrm{\hyp{}}$algebra homomorphism. Clearly we have:
\begin{enumerate}[i)]
\item $a(\alpha)\geq 0\Leftrightarrow a\in \alpha$
\item $a(\alpha)> 0\Leftrightarrow a\notin -\alpha$
\item $a(\alpha)= 0\Leftrightarrow a\in \mathrm{supp}(\alpha)$.
\end{enumerate}
We conclude with proposition given below:   
\begin{prop}[[1, Proposition 7.1.2$\rbrack$]
The following data are equivalent:
\begin{enumerate}[i)]
\item A prime cone $\alpha$ of $A$.
\item A couple $(\mathfrak{p},\leq)$ where $\mathfrak{p}$ is a prime ideal of $A$, and $\leq$ is an ordering of the residue field $k(\mathfrak{p})$. The ideal $\mathfrak{p}$ is then necessarily real $\mathrm{(cf.~[1,~Lemma~4.1.6])}$.
\item An equivalence class of homomorphisms $\varphi:A\rightarrow R$ with values in a real closed field, for the smallest equivalence relation, such that $\varphi$ and $\varphi'$ are equivalent if there exists a commutative diagram of homomorphisms:
$$\begin{tikzcd}[column sep=large]
A \arrow{r}{\varphi}  \arrow{rd}{\varphi'} 
  & R \arrow{d}{} \\
    & R'
\end{tikzcd}$$
\end{enumerate}
More precisely, one goes from i) to ii) by taking $(\mathfrak{p},\leq)=(\mathrm{supp}(\alpha), \leq_{\alpha})$, from ii) to iii) by taking $\varphi:A\rightarrow A/\mathfrak{p}\rightarrow k(\mathfrak{p})\rightarrow R$, where $R$ is the real closure of $k(\mathfrak{p})$ for $\leq$, and from iii) to i) by taking $\alpha=\lbrace a\in A \mid \varphi(a)\geq 0  \rbrace$. 
\end{prop}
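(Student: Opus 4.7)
The plan is to split the claimed equivalence of data i), ii), iii) into the two bijections i) $\Leftrightarrow$ ii) and ii) $\Leftrightarrow$ iii), give explicit constructions in both directions, and verify they are mutual inverses. Lemma 3.4 together with the paragraph of observations between it and the present proposition already supply essentially all of i) $\Leftrightarrow$ ii); the real content lies in ii) $\Leftrightarrow$ iii), where one must argue that the equivalence relation on homomorphisms has been formulated so that each class corresponds to a single pair $(\mathfrak{p},\leq)$.

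For i) $\Rightarrow$ ii), given a prime cone $\alpha$, take $\mathfrak{p}=\mathrm{supp}(\alpha)$, which is real and prime by Lemma 3.4, and let $\leq$ be the ordering of $k(\mathfrak{p})$ whose positive cone is $\overline{\alpha}$, again furnished by Lemma 3.4. For ii) $\Rightarrow$ i), given $(\mathfrak{p},\leq)$ with positive cone $\beta\subseteq k(\mathfrak{p})$, set $\alpha=\pi^{-1}(\beta)$, where $\pi\colon A\to k(\mathfrak{p})$ is the canonical map. Axioms i)--iii) of Definition 3.2 for $\alpha$ follow immediately from the corresponding properties of $\beta$; properness holds since $-1\in\beta$ would make $-1$ a square in $k(\mathfrak{p})$, contradicting the existence of the ordering $\leq$; primality follows from the totality of $\leq$, for if $ab\in\alpha$ then $\pi(a)\pi(b)\geq 0$ in $k(\mathfrak{p})$, so $\pi(a)\geq 0$ or $\pi(b)\leq 0$, i.e.\ $a\in\alpha$ or $-b\in\alpha$. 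That the two assignments invert one another is built into Lemma 3.4.

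For ii) $\Rightarrow$ iii), given $(\mathfrak{p},\leq)$, take $R$ to be the real closure of $(k(\mathfrak{p}),\leq)$ and let $\varphi$ be the canonical composition $A\to A/\mathfrak{p}\to k(\mathfrak{p})\hookrightarrow R$. For iii) $\Rightarrow$ ii), given $\varphi\colon A\to R$, set $\mathfrak{p}=\ker\varphi$, which is prime since $R$ is a field; then $\varphi$ factors as $A\twoheadrightarrow A/\mathfrak{p}\hookrightarrow k(\mathfrak{p})\hookrightarrow R$, and pulling back the unique ordering of the real closed field $R$ along $k(\mathfrak{p})\hookrightarrow R$ gives the required ordering $\leq$ of $k(\mathfrak{p})$.

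The delicate step, which I expect to be the main obstacle, is verifying that the equivalence relation on homomorphisms from iii) has classes in bijection with pairs $(\mathfrak{p},\leq)$. Well-definedness is straightforward: if $\varphi$ and $\varphi'$ are related by a single diagram with $\varphi'=(R\to R')\circ\varphi$, then $\ker\varphi=\ker\varphi'$, since $R\to R'$ is an injection of fields, and the orderings pulled back to $k(\mathfrak{p})$ agree; these invariants are preserved under the generated equivalence. For the converse, given two homomorphisms $\varphi\colon A\to R$ and $\varphi'\colon A\to R'$ attached to the same $(\mathfrak{p},\leq)$, let $R_{0}$ denote the real closure of $(k(\mathfrak{p}),\leq)$ and $\psi\colon A\to R_{0}$ the canonical map. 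By the universal property of real closures of ordered fields, the embedding $k(\mathfrak{p})\hookrightarrow R$ extends uniquely to an embedding $R_{0}\hookrightarrow R$, and likewise for $R'$, so $\varphi=(R_{0}\hookrightarrow R)\circ\psi$ and $\varphi'=(R_{0}\hookrightarrow R')\circ\psi$. Thus $\psi\sim\varphi$ and $\psi\sim\varphi'$ via single diagram steps, and transitivity yields $\varphi\sim\varphi'$.
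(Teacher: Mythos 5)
Your proposal is essentially correct, but note that the paper does not prove this proposition at all: it is quoted verbatim from [1, Proposition 7.1.2] as part of the real-algebra preliminaries, so there is nothing in the text to compare against line by line. What you have written is a sound reconstruction of the standard argument, and your decomposition is the natural one: the bijection i) $\Leftrightarrow$ ii) is indeed already contained in Lemma 3.4 (that $\overline{\alpha}$ is the positive cone of an ordering of $k(\mathrm{supp}(\alpha))$ and that $\alpha$ is recovered as its preimage), while the content of ii) $\Leftrightarrow$ iii) is exactly the uniqueness of the real closure of an ordered field (Artin--Schreier, [1, Theorem 1.3.2]), which you correctly identify and use to show that any $\varphi\colon A\to R$ inducing $(\mathfrak{p},\leq)$ is equivalent, via a single generating step, to the canonical $\psi\colon A\to R_{0}$. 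You should also make explicit the fact underlying your well-definedness check: a field homomorphism between real closed fields is automatically order-preserving because the nonnegative elements are precisely the squares; this is what makes the pulled-back orderings agree.

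One micro-justification is wrong as stated: in verifying properness of $\alpha=\pi^{-1}(\beta)$ you write that $-1\in\beta$ ``would make $-1$ a square in $k(\mathfrak{p})$.'' Membership in the positive cone of an ordering does not imply being a square. The correct (and equally immediate) reason is that $-1<0$ in every ordered field, since $1=1^{2}\geq 0$ and $\beta\cap(-\beta)=\{0\}$; so $-1\notin\beta$ and hence $-1\notin\alpha$. This does not affect the validity of the overall argument.
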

Now we are ready to give a formal definition of the real spectrum of commutative ring $A$:
\begin{defn}
The real spectrum $\mathrm{Spec_{r}}A$ of $A$ is the topological space whose points are the prime cones of $A$, with topology given by the basis of open subsets of the form $$\widetilde{\mathit{U}}(a_{1},...,a_{n})=\lbrace             \alpha \in \mathrm{Spec_{r}}A \mid a_{1}(\alpha)>0,..., a_{n}(\alpha)>0  \rbrace,$$ where $a_{1},...,a_{n}$ is any finite family of elements of $A$. This topology is called the spectral topology.    
\end{defn}
The elements of $A$ are now regarded as "functions" on $\mathrm{Spec_{r}}A$.
\newline \newline \indent We still need some results listed below.
\begin{prop}[[3, Proposition 5.1$\rbrack$] 
Let $(k,N)\in\mathbb{N}\times\mathbb{N}^{\ast}$. Any boolean combination of subsets of the form $\lbrace x\in\mathbb{R}^{N} \mid f(x)>0\rbrace$, where $f\in\mathcal{R}^{k}\left( \mathbb{R}^{N}\right)$, is a semi-algebraic subset of $\mathbb{R}^{N}$.
\end{prop}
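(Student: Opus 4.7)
The plan is to reduce the general ``boolean combination'' statement to the single positivity set case. Semi-algebraic subsets of $\mathbb{R}^N$ form a boolean algebra (they are closed under finite unions, finite intersections, and complements, essentially by definition), so it is enough to show that for a single $f\in\mathcal{R}^{k}\left( \mathbb{R}^{N}\right)$ the set
$$S_{+}(f):=\lbrace x\in\mathbb{R}^{N} \mid f(x)>0\rbrace$$
is semi-algebraic. Boolean combinations of such sets are then semi-algebraic automatically.

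To handle $S_{+}(f)$, I would invoke the regular stratification theorem (Theorem 2.6 in the excerpt) applied to $f$. This gives a finite decomposition $\mathbb{R}^{N}=\coprod_{i=1}^{m} S_{i}$ into Zariski locally closed subsets such that each restriction $f|_{S_{i}}$ is regular. Each $S_{i}$ is a difference of two Zariski closed sets, hence is semi-algebraic. Regularity of $f|_{S_{i}}$ allows us to write $f|_{S_{i}}=p_{i}/q_{i}$ for polynomials $p_{i},q_{i}\in\mathbb{R}[x_{1},\ldots,x_{N}]$ with $q_{i}$ nowhere vanishing on $S_{i}$; consequently
$$S_{+}(f)\cap S_{i}=\lbrace x\in S_{i} \mid p_{i}(x)q_{i}(x)>0\rbrace,$$
which is patently a semi-algebraic subset of $\mathbb{R}^{N}$. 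Taking the disjoint union over $i=1,\ldots,m$ realizes $S_{+}(f)$ as a finite union of semi-algebraic sets, hence as a semi-algebraic set.

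The main point of the argument is that the entire analytic/geometric content is hidden inside Theorem 2.6; once this regular stratification is available, the verification is purely bookkeeping using the closure properties of the class of semi-algebraic sets under finite boolean operations. The only mild care one must take is to note that in the expression $p_{i}/q_{i}>0$ on $S_{i}$ one may clear denominators by writing $p_{i}q_{i}>0$ because $q_{i}$ does not vanish on $S_{i}$, so the set is cut out by a polynomial sign condition together with the semi-algebraic condition $x\in S_{i}$.
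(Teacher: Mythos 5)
The paper offers no proof of this statement---it is quoted verbatim from [3, Proposition 5.1]---so there is nothing to diverge from; your argument is correct and is essentially the standard one (and the one used in the cited source), reducing via closure of semi-algebraic sets under boolean operations to a single set $\lbrace f>0\rbrace$ and then applying the stratification of Theorem 2.6. The only point worth flagging is your global representation $f\vert_{S_{i}}=p_{i}/q_{i}$ with $q_{i}$ nowhere zero on all of $S_{i}$: this is legitimate for regular functions on real algebraic sets (cf. [1, Proposition 3.2.3], or the discussion of $\dom(f)$ at the start of Section 2), and in any case even a Zariski-local representation would suffice after refining the finite decomposition.
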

If $f$ is a rational function in $\mathbb{R}(x_{1},...,x_{N})$ and $\mathbb{R}\rightarrow R$ a real closed field extension, then one can define (independently of the representation as a quotient $f=\dfrac{p}{q},~p,q \in \mathbb{R}[x_{1},...,x_{N}])$ the function $f_{R}=\dfrac{p}{q}$ viewed as a rational function in $R(x_{1},..., x_{N})$. Let $S\subseteq \mathbb{R}^N$ be a semi-algebraic set given by a boolean combination $\mathcal{B}(x)$ of sign conditions on polynomials in $\mathbb{R}[x_{1}, ..., x_{N}]$, where $x=(x_{1}, ..., x_{N})$. The subset $\lbrace x\in R^{N} \mid \mathcal{B}(x) \rbrace$ of $R^{N}$, denoted by $S_{R}$, is semi-algebraic and depends only on the set $S$ and not on the boolean combination chosen to describe it (cf.~[1,~Proposition~5.1.1]). The set $S_{R}$ is called the extension of $S$ to $R$. We can also extend a function $f\in\mathcal{R}^{k}\left( \mathbb{R}^{N}\right)$. Indeed, by the Tarski--Seidenberg principle, $f$ as a rational function, has an unique extension to any real closed field containing  $\mathbb{R}$. We have the following:
\begin{prop}[[3, Proposition 5.2$\rbrack$]
Let $(k,N)\in\mathbb{N}\times\mathbb{N}^{\ast}$, $f\in\mathbb{R}(x_{1},...,x_{N})$ and $\mathbb{R}\rightarrow R$ a real closed field extension. Then $f\in\mathcal{R}^{k}\left( \mathbb{R}^{N}\right)$ if and only if $f_{R}\in\mathcal{R}^{k}\left( R^{N}\right)$.
\end{prop}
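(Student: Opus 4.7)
Since the hypothesis fixes $f\in\mathbb{R}(x_1,\ldots,x_N)$ and $f_R$ is just the reinterpretation of the same rational expression over $R$, rationality transfers automatically; the substance of the proposition is the equivalence of $\mathcal{C}^k$-smoothness on $\mathbb{R}^N$ and on $R^N$. The plan is to encode $\mathcal{C}^k$-smoothness of a rational function by a first-order formula in the language of ordered fields (with the coefficients of numerator and denominator as parameters), and then invoke the Tarski--Seidenberg transfer principle, under which such a sentence holds with semantics in $\mathbb{R}$ if and only if it holds with semantics in $R$.

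First I would fix relatively prime $p,q\in\mathbb{R}[x_1,\ldots,x_N]$ with $f=p/q$. For any multi-index $\alpha$ with $|\alpha|\leq k$, the formal partial derivative $\partial^\alpha f$ computed via the quotient rule is again a rational function with coefficients in $\mathbb{R}$, and over $R$ the corresponding formal partial derivative of $f_R$ is obtained by reinterpreting the same expression. I would then argue that $f$ is of class $\mathcal{C}^k$ on $\mathbb{R}^N$ if and only if each formal partial derivative of order $\leq k$, regarded as a function on $\mathrm{dom}(f)$, admits a continuous extension to all of $\mathbb{R}^N$. The nontrivial direction uses Proposition 2.4 to ensure that the indeterminacy locus has codimension $\geq 2$, so $\mathrm{dom}(f)$ is dense and the continuous extensions of the formal derivatives must coincide with the classical partial derivatives of the $\mathcal{C}^k$-extension of $f$.

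The second step is to express continuity of the extension of a rational function $g=s/t\in\mathbb{R}(x_1,\ldots,x_N)$ to $\mathbb{R}^N$ as a first-order sentence. Clearing denominators, it becomes
$$\forall a\,\exists L\,\forall\varepsilon>0\,\exists\delta>0\,\forall x\,\Bigl(\|x-a\|^2<\delta^2\wedge t(x)\neq 0\Rightarrow(s(x)-Lt(x))^2<\varepsilon^2 t(x)^2\Bigr),$$
a sentence in the language of ordered fields whose parameters are the coefficients of $s$ and $t$. Applied to each of the finitely many formal partial derivatives of $f$ of order $\leq k$, this yields a finite conjunction of such first-order sentences that encodes ``$f\in\mathcal{R}^{k}(\mathbb{R}^N)$''.

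Finally, the Tarski--Seidenberg transfer principle, i.e.\ the completeness of the theory of real closed fields, guarantees that this conjunction holds in $\mathbb{R}$ if and only if it holds in $R$, yielding the desired equivalence. I expect the main technical obstacle to be the reduction in the first step: justifying that $\mathcal{C}^k$-smoothness on $\mathbb{R}^N$ is equivalent to continuous extendibility of all formal partial derivatives of order $\leq k$. This requires careful use of the codimension-$\geq 2$ property of the indeterminacy locus to identify the classical derivatives of the continuous extension with the formal derivatives of the rational function along the nowhere-dense singular set. Alternatively one can bypass this identification by writing the $\mathcal{C}^k$ condition directly as an iterated $\varepsilon$-$\delta$ formula on the dense open $\mathrm{dom}(f)$, which is manifestly first-order over $\mathbb{R}$ with the coefficients of $p,q$ as parameters, and then still conclude by Tarski--Seidenberg.
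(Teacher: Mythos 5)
The paper does not actually prove this proposition: it is quoted verbatim from the literature (reference [3, Proposition 5.2]) as a known preliminary, so there is no in-paper argument to compare against. Your proposal reconstructs what is essentially the standard proof from that source: reduce $k$-regulousness to a finite conjunction of first-order conditions in the language of ordered fields with the coefficients of $p,q$ as parameters, then transfer by Tarski--Seidenberg. The outline is sound, and your $\varepsilon$-$\delta$ formula for ``the rational function $s/t$ admits a continuous extension'' is correct (existence of the limit at every point does yield continuity of the limit function, over $\mathbb{R}$ and, by the same $\varepsilon$-$\delta$ argument, over $R$).

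Two points deserve more care than you give them. First, the equivalence ``$f\in\mathcal{R}^{k}(\mathbb{R}^{N})$ if and only if every formal partial derivative of order $\leq k$ extends continuously'' is not a routine reduction but a genuine theorem (it is [2, Theorem 3.3] in the paper's bibliography); you correctly identify it as the main obstacle, but a complete proof would have to either cite it or carry out the mean-value-theorem argument along lines avoiding the codimension-$\geq 2$ indeterminacy locus. Second, and this is the gap you do not address: to conclude $f_{R}\in\mathcal{R}^{k}(R^{N})$ from the transferred sentences, you need the \emph{same} characterization of $\mathcal{C}^{k}$-ness to be valid over the (possibly non-archimedean) real closed field $R$, with whatever semialgebraic definition of $\mathcal{C}^{k}$ is in force there. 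This is true --- the mean value theorem holds for continuous semialgebraic functions over any real closed field, so the argument repeats --- but it must be said; otherwise the transfer only shows that certain first-order sentences hold over $R$, not that they mean ``$f_{R}$ is $k$-regulous''. Your fallback of encoding the $\mathcal{C}^{k}$ condition directly as iterated limit formulas sidesteps this more cleanly, provided $\mathcal{C}^{k}$ over $R$ is \emph{defined} by those formulas, and is probably the tidier way to finish.
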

 Now we are going, following the paper [3], to present a proof of the regulous version of Nullstellensatz, which is based on the substitution property recalled below.
\begin{prop}
Let $(k,N)\in\mathbb{N}\times\mathbb{N}^{\ast}$. Any $\mathbb{R}\textrm{\hyp{}}$algebra homomorphism $$\phi:\mathcal{R}^{k}\left( \mathbb{R}^{N}\right)\rightarrow R,$$ where $R$ is a real closed extension of $\mathbb{R}$, is uniquely determined by its values on $\mathbb{R}[x_{1},x_{2},...,x_{N}]$, i.e. $\phi$ is an evaluation map.
\end{prop}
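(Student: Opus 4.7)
The plan is to prove the stronger statement $\phi(f)=f_R(a)$ for every $f\in\mathcal{R}^k(\mathbb{R}^N)$, where $a:=(\phi(x_1),\ldots,\phi(x_N))\in R^N$ and $f_R\in\mathcal{R}^k(R^N)$ denotes the extension supplied by Proposition 3.9. Since $\phi$ is an $\mathbb{R}$-algebra homomorphism, $\phi(P)=P(a)$ already for every $P\in\mathbb{R}[x_1,\ldots,x_N]$, so writing $f=p/q$ in lowest terms and applying $\phi$ to the identity $qf=p$ (which holds on $\mathrm{dom}(f)$ and extends everywhere by continuity) immediately yields $\phi(f)=p(a)/q(a)=f_R(a)$ whenever $q(a)\neq 0$. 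The only real difficulty arises in the remaining case $q(a)=0$, i.e.\ when $a$ lies in the indeterminacy locus of $f_R$.

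To handle this case, I would apply Theorem 2.6 to $f$: $\mathbb{R}^N=\coprod_{i=1}^m S_i$ with each $S_i$ Zariski locally closed and $f|_{S_i}$ regular. By the Tarski--Seidenberg principle this decomposition extends to $R^N=\coprod_i (S_i)_R$, so there is a unique index $j$ with $a\in(S_j)_R$. Writing $S_j=V_j\cap U_j$ with $V_j$ Zariski closed and $U_j$ Zariski open, and replacing each defining family of polynomials by a single sum of squares, one produces polynomials $P,Q\in\mathbb{R}[x_1,\ldots,x_N]$ with $V_j=\mathcal{Z}(P)$ and $\mathbb{R}^N\setminus U_j=\mathcal{Z}(Q)$; the condition $a\in(S_j)_R$ then translates to $P(a)=0$ and $Q(a)\neq 0$. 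Let $f|_{S_j}=p_j/q_j$ with $q_j$ nonvanishing on $S_j$; Tarski--Seidenberg gives $q_j(a)\neq 0$, and the desired equality $\phi(f)=p_j(a)/q_j(a)$ reduces to proving $\phi(g)=0$ for the regulous function $g:=q_j f-p_j$, which vanishes on $S_j$.

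The last step is carried out via the \L{}ojasiewicz-type Proposition 2.8. Multiplying $g$ by $Q$, the product $gQ$ vanishes on all of $V_j$: on $V_j\cap U_j=S_j$ because $g$ does, and on $V_j\setminus U_j\subseteq\mathcal{Z}(Q)$ because $Q$ does. Hence $\mathcal{Z}(P)=V_j\subseteq\mathcal{Z}(gQ)$, and Proposition 2.8 provides an integer $M$ and a regulous $r$ with $(gQ)^M=P\cdot r$. Applying $\phi$ and using $P(a)=0$ gives $(\phi(g)Q(a))^M=0$ in the field $R$; since $Q(a)\neq 0$, this forces $\phi(g)=0$, completing the argument. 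The main obstacle throughout is precisely the indeterminacy-locus case, where the naive relation $q(a)\phi(f)=p(a)$ is vacuous; the resolution combines Theorem 2.6 (which replaces the global representation of $f$ by one regular on a stratum through $a$) with Proposition 2.8 (which promotes the geometric vanishing of $g$ on $S_j$ to an algebraic identity visible to $\phi$).
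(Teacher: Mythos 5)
Your proposal is correct and follows essentially the same route as the paper: restrict $\phi$ to polynomials to get the evaluation point, use the stratification of Theorem 2.6 together with Tarski--Seidenberg to locate that point in a stratum where $f$ is regular, and then apply the \L{}ojasiewicz property (Proposition 2.8) to the function $Q\cdot(q_jf-p_j)$ vanishing on $\mathcal{Z}(P)$ to force $\phi(q_jf-p_j)=0$. The only cosmetic difference is your preliminary case split on whether $q(a)\neq 0$; the paper handles both cases uniformly through the stratification argument.
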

\begin{proof}
First, observe that $\restr{\phi}{\mathbb{R}[x_{1},x_{2},...,x_{N}]}$ is an evaluation map (as $\mathbb{R}\textrm{\hyp{}}$algebra homomorphism from the polynomial ring over $\mathbb{R}$ into a real closed extension $\mathbb{R}\subseteq R$). Take arbitrary $f\in\mathcal{R}^{k}\left( \mathbb{R}^{N}\right)$. We know by Theorem 2.6 that, there is a stratification on Zariski locally closed subsets $\mathbb{R}^{N}=S_{0}\cup...\cup S_{m}$ such that $f$ is regular on each $S_{i}$. From Tarski--Seidenberg's principle we also know that $R^{N}={S_{0}}_{R}\cup...\cup {S_{m}}_{R}$. Since $$\restr{\phi}{\mathbb{R}[x_{1},x_{2},...,x_{N}]} :\mathbb{R}[x_{1},x_{2},...,x_{N}]\rightarrow R$$ is an evaluation homomorphism, we find $x_{0}\in R^{N}$ such that $\phi(w)=w(x_{0})$, for all $w \in\mathbb{R}[x_{1},x_{2},...,x_{N}]$. Then changing numeration of indices we may assume that $x_{0}\in {S_{0}}_{R}$. Because $f$ is regular on $S_{0}$, we can write $f=\dfrac{p}{q}$ on $S_{0}$, where $p,q$ are polynomials in $\mathbb{R}[x_{1},x_{2},...,x_{N}]$, and $q$ is nonzero everywhere on $S_{0}$. Let $S_{0}= \lbrace x\in \mathbb{R}^{N} \mid r(x)=0, s(x)\neq 0\rbrace $, for some polynomials $r,s$. By Tarski--Seidenberg's principle ${S_{0}}_{R}= \lbrace x\in R^{N} \mid r(x)=0, s(x)\neq 0\rbrace $. Clearly, $\phi(r)=r(x_{0})=0$, $\phi(s)=s(x_{0})\neq0$, because $r,s \in \mathbb{R}[x_{1},x_{2},...,x_{N}]$. Again by Tarski--Seidenberg's principle ${S_{0}}_{R}\subseteq\lbrace x\in R^{N}\mid q(x)\neq 0 \rbrace$ and thus $\phi(q)=q(x_{0})\neq 0$. Observe that we have inclusion (in $\mathbb{R}^{N}$) of zero sets $\mathcal{Z}(r)\subseteq \mathcal{Z}(s\cdot(qf-p))$ (indeed, if $s(x)\neq 0$, then $ x\in S_{0}$ and we use regularity of $f=\dfrac{p}{q}$). Then by the \L{}ojasiewicz property (Proposition 2.8) there exist an integer $M$ and $g\in \mathcal{R}^{k}\left( \mathbb{R}^{N}\right)$ such that $(s\cdot(qf-p))^{M}=rg$. Applying $\phi$ we get $\phi(s)^{M}\cdot(\phi(q)\phi(f)-\phi(p))^{M}=\phi(r)\phi(g)$. Since $\phi(r)=0$ and $\phi(s)\neq 0$ we finally obtain that $\phi(q)\phi(f)-\phi(p)=0$ and we use the fact that $\phi(q)\neq 0$ to conclude that $\phi(f)=\dfrac{\phi(p)}{\phi(q)}$.   
\end{proof}
The proof of Nullstellensatz we present relies on the following Artin--Lang property for $\mathcal{R}^{k}\left( \mathbb{R}^{N}\right)$.
\begin{prop}
Let $(k,N)\in\mathbb{N}\times\mathbb{N}^{\ast}$. Let $f, f_{1},...,f_{r}$ in $\mathcal{R}^{k}\left( \mathbb{R}^{N}\right)$ and set $$S=\lbrace x\in \mathbb{R}^{N} \mid f(x)=0, f_{1}(x)>0,...,f_{r}(x)>0    \rbrace,$$ and $$\widetilde{S}=\lbrace \alpha\in \mathrm{Spec_{r}} \mathcal{R}^{k}\left( \mathbb{R}^{N}\right)\mid f(\alpha)=0, f_{1}(\alpha)>0,...,f_{r}(\alpha)>0    \rbrace.$$
Then $\widetilde{S}=\emptyset$ if and only if $S=\emptyset$.
\end{prop}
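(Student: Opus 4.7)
The plan is to handle the two implications separately. If $S \neq \emptyset$, pick $x \in S$ and consider the evaluation map $\phi_x : \mathcal{R}^k(\mathbb{R}^N) \to \mathbb{R}$, $g \mapsto g(x)$; since $\mathbb{R}$ is a real closed field, Proposition~3.5 turns the cone $\alpha_x = \{g \mid g(x) \geq 0\}$ into a point of $\mathrm{Spec}_r\,\mathcal{R}^k(\mathbb{R}^N)$. Using the dictionary $g(\alpha) = 0 \Leftrightarrow g \in \mathrm{supp}(\alpha)$ and $g(\alpha) > 0 \Leftrightarrow g \notin -\alpha$ recalled in Section~3, the hypotheses $f(x)=0$ and $f_i(x)>0$ read as $f(\alpha_x)=0$ and $f_i(\alpha_x)>0$, so $\alpha_x \in \widetilde{S}$.

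For the nontrivial converse, fix $\alpha \in \widetilde{S}$. By Proposition~3.5 it is represented by a homomorphism $\phi : \mathcal{R}^k(\mathbb{R}^N) \to R$ into a real closed extension $R$ of $\mathbb{R}$ with $\phi(f)=0$ and $\phi(f_i)>0$. The substitution property (Proposition~3.8) gives a point $x_0 \in R^N$ such that $\phi(w) = w(x_0)$ for every polynomial $w$, and inspection of its proof (via the regular stratification of Theorem~2.6 and the \L{}ojasiewicz property of Proposition~2.8) shows that for \emph{every} $g \in \mathcal{R}^k(\mathbb{R}^N)$ one in fact has $\phi(g) = g_R(x_0)$, with $g_R \in \mathcal{R}^k(R^N)$ the extension provided by Proposition~3.7. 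In particular $f_R(x_0)=0$ and $(f_i)_R(x_0)>0$ in $R$.

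To descend from $R$ back to $\mathbb{R}$ I would apply Theorem~2.6 simultaneously to $f, f_1,\ldots,f_r$, intersecting the stratifications, to obtain a common finite stratification $\mathbb{R}^N = \bigsqcup_j S_j$ by Zariski locally closed sets on each of which every $f_i$ and $f$ is a quotient of polynomials with denominator nowhere vanishing on $S_j$. This gives $S$ an explicit semi-algebraic description by a boolean combination $\mathcal{B}(x)$ of polynomial sign conditions, so by Tarski--Seidenberg the extension $S_R$ is defined in $R^N$ by the same $\mathcal{B}(x)$, while $R^N = \bigsqcup_j (S_j)_R$. On whichever piece $(S_j)_R$ contains $x_0$, the conditions $f_R(x_0)=0$ and $(f_i)_R(x_0)>0$ become precisely the polynomial sign conditions from $\mathcal{B}$ that cut out $S_R \cap (S_j)_R$, so $x_0 \in S_R$. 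Hence $S_R \neq \emptyset$, and Tarski--Seidenberg forces $S \neq \emptyset$.

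The main obstacle I anticipate is exactly this last piece of bookkeeping: one must verify that the locus in $R^N$ defined by the regulous conditions $f_R=0$, $(f_i)_R>0$ coincides with the Tarski--Seidenberg extension $S_R$ of the semi-algebraic set $S \subseteq \mathbb{R}^N$. This is not formal and relies on the stratum-by-stratum computation of $g_R$ afforded by Proposition~3.7 together with Theorem~2.6, which reduces both descriptions to identical polynomial sign conditions on the common strata and thereby brings the classical Tarski--Seidenberg transfer to bear.
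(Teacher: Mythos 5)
Your argument is correct and follows essentially the same route as the paper: the easy direction via evaluation cones, and the substitution property producing a point $x_{0}\in R^{N}$ with $f_{R}(x_{0})=0$, $(f_{i})_{R}(x_{0})>0$, which contradicts $S_{R}=\emptyset$ after transferring emptiness by Tarski--Seidenberg. The ``bookkeeping'' you flag at the end is exactly what the paper disposes of by invoking the semi-algebraicity of $S$ (its Proposition 3.8) and the independence of the extension $S_{R}$ from the chosen description, so your stratification step just makes that implicit point explicit.
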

\begin{proof}
Since $S$ is a semi-algebraic subset of $\mathbb{R}^{N}$, there exist $(l,m)\in\mathbb{N}^{2}$ and some polynomials $p_{i,j}, q_{i}$, $1\leq i \leq l, 1\leq j \leq m$ in $\mathbb{R}[x_{1},x_{2},...,x_{N}]$, such that $$S=\bigcup_{i=1}^{l}S_{i}$$ with $S_{i}=\lbrace x\in \mathbb{R}^{N} \mid  q_{i}(x)=0, p_{i,1}(x)>0,...,p_{i,m}(x)>0    \rbrace$. Then for any real closed extension $\mathbb{R}\subseteq R$ $$S_{R}=\bigcup_{i=1}^{l}{S_{i}}_{R}$$ where  $\displaystyle {S_{i}}_{R}=\lbrace x\in R^{N} \mid  q_{i}(x)=0, p_{i,1}(x)>0,...,p_{i,m}(x)>0    \rbrace$. By Tarski--Seidenberg's principle, $S_{i}=\emptyset$ if and only if ${S_{i}}_{R}=\emptyset$, for $1\leq i \leq l$. Thus $S=\emptyset$ if and only if $S_{R}=\emptyset$. \newline Assume $S=\emptyset$ and $\widetilde{S} \neq \emptyset$. Take $\alpha \in \widetilde{S}$. From our discussion on the real spectrum we know that $\alpha$ corresponds to an $\mathbb{R}\textrm{\hyp{}}$algebra homomorphism $\phi:\mathcal{R}^{k}\left( \mathbb{R}^{N}\right)\rightarrow R$ where $R$ is a real closed extension of $\mathbb{R}$. Thus $$\phi(f)=0, \phi(f_{1})>0,..., \phi(f_{r})>0 .$$ By the substitution property (Proposition 3.10) $\phi$ is an evaluation morphism. Therefore exists $x_{0}\in R^{N}$ such that $$f_{R}(x_{0})=0, {f_{1}}_{R}(x_{0})>0,..., {f_{r}}_{R}(x_{0})>0 $$ which is a contradiction with $S_{R}=\emptyset$.    
\end{proof}
We first prove the weak Nullstellensatz for $\mathcal{R}^{k}\left( \mathbb{R}^{N}\right)$.
\begin{prop}
Let $(k,N)\in\mathbb{N}\times\mathbb{N}^{\ast}$. Let $I\subseteq\mathcal{R}^{k}\left( \mathbb{R}^{N}\right)$ be an ideal. Then $\mathcal{Z}(I)=\emptyset$ if and only if $I=\mathcal{R}^{k}\left( \mathbb{R}^{N}\right)$.
\end{prop}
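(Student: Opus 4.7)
The direction $I = \mathcal{R}^{k}(\mathbb{R}^{N}) \Rightarrow \mathcal{Z}(I) = \emptyset$ is immediate from $1 \in I$, so the task is the converse. My plan is to produce a single element of $I$ that is a unit in $\mathcal{R}^{k}(\mathbb{R}^{N})$, which forces $1 \in I$.

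First I would reduce to finitely many generators via Noetherianity of the regulous topology, recalled at the end of Section $2$. Consider the family $\mathcal{F} = \{\mathcal{Z}(f_{1},\ldots,f_{r}) : r \geq 0,\ f_{i} \in I\}$ of regulous closed subsets of $\mathbb{R}^{N}$; it is stable under finite intersections, so Noetherianity yields a minimal element $F^{*}$. For every $f \in I$ one has $F^{*} \cap \mathcal{Z}(f) \in \mathcal{F}$ with $F^{*} \cap \mathcal{Z}(f) \subseteq F^{*}$, so minimality gives $F^{*} \subseteq \mathcal{Z}(f)$; intersecting over $f \in I$ yields $F^{*} = \mathcal{Z}(I) = \emptyset$. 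Fix $f_{1},\ldots,f_{r} \in I$ witnessing this and set $f := f_{1}^{2}+\cdots+f_{r}^{2} \in I$. Then $\mathcal{Z}(f) = \bigcap_{i} \mathcal{Z}(f_{i}) = \emptyset$, so $f(x) > 0$ for every $x \in \mathbb{R}^{N}$.

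Next I would argue that $1/f$ is $k$-regulous. Rationality is automatic since $f$ is a nonzero element of $\mathbb{R}(x_{1},\ldots,x_{N})$, and membership in $\mathcal{C}^{k}(\mathbb{R}^{N})$ follows from the quotient rule, because $f$ is of class $\mathcal{C}^{k}$ and strictly positive everywhere. Thus $1/f \in \mathcal{R}^{k}(\mathbb{R}^{N})$ and $1 = f \cdot (1/f) \in I$, giving $I = \mathcal{R}^{k}(\mathbb{R}^{N})$. The only delicate step is the Noetherian reduction producing the finite list $f_{1},\ldots,f_{r}$; the remainder is routine once one observes that a nowhere vanishing $k$-regulous function is automatically a unit of $\mathcal{R}^{k}(\mathbb{R}^{N})$. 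I note in passing that the Artin--Lang property (Proposition 3.11) is not strictly needed for this weak form; it will become indispensable only when upgrading to the full Nullstellensatz to be proved next.
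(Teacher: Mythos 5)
Your proposal is correct and follows essentially the same route as the paper: reduce $\mathcal{Z}(I)$ to a finite intersection $\mathcal{Z}(f_{1})\cap\cdots\cap\mathcal{Z}(f_{r})$ via noetherianity of the regulous topology, pass to $f=f_{1}^{2}+\cdots+f_{r}^{2}\in I$, and observe that $f$ is invertible in $\mathcal{R}^{k}\left(\mathbb{R}^{N}\right)$ when it has no zeros. You merely spell out two steps the paper leaves implicit (the minimal-element argument and the verification that $1/f$ is $k$-regulous), and your remark that the Artin--Lang property is not needed here is accurate.
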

\begin{proof}
By noetherianity of the regulous topology we obtain $\mathcal{Z}(I)=\mathcal{Z}(f_{1})\cap...\cap\mathcal{Z}(f_{m})$ for some finite tuple $f_{1},...,f_{m}\in I$. Taking $f:=f^{2}_{1}+...+f^{2}_{m}\in I$, we get $\mathcal{Z}(I)=\mathcal{Z}(f)$. If $\mathcal{Z}(I)=\emptyset$, then $f$ is invertible and thus $1\in I$. The converse implication is trivial.   
\end{proof}
To prove the general version of Nullstellensatz we require also the property that radical ideals are real in $\mathcal{R}^{k}\left( \mathbb{R}^{N}\right)$.
\begin{lem}
Let $(k,N)\in\mathbb{N}\times\mathbb{N}^{\ast}$. Let $I\subseteq\mathcal{R}^{k}\left( \mathbb{R}^{N}\right)$ be a radical ideal. Then $I$ is a real ideal.
\end{lem}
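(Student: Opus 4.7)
The plan is to reduce the real-ideal condition to the \L{}ojasiewicz property (Proposition 2.8) together with the definition of a radical ideal. Concretely, take elements $a_{1},\dots,a_{n}\in\mathcal{R}^{k}(\mathbb{R}^{N})$ with $a_{1}^{2}+\cdots+a_{n}^{2}\in I$, and fix an index $i$; the goal is $a_{i}\in I$.

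First I would observe the set-theoretic identity
$$\mathcal{Z}\!\left(a_{1}^{2}+\cdots+a_{n}^{2}\right)=\bigcap_{j=1}^{n}\mathcal{Z}(a_{j}),$$
which holds because the $a_{j}$ are real valued, so a sum of their squares vanishes at a point if and only if every summand does. In particular $\mathcal{Z}\!\left(\sum_{j}a_{j}^{2}\right)\subseteq\mathcal{Z}(a_{i})$.

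Next I would apply Proposition 2.8 with $g:=a_{1}^{2}+\cdots+a_{n}^{2}$ and $f:=a_{i}$. The inclusion of zero sets just established is exactly the hypothesis of that proposition, so there exist a natural number $M$ and $h\in\mathcal{R}^{k}(\mathbb{R}^{N})$ with
$$a_{i}^{M}=\left(a_{1}^{2}+\cdots+a_{n}^{2}\right)h.$$
Since $\sum_{j}a_{j}^{2}\in I$ and $I$ is an ideal, the right-hand side lies in $I$, hence $a_{i}^{M}\in I$. Finally, using that $I$ is radical, we conclude $a_{i}\in I$, which is what was required; as $i$ was arbitrary this shows $I$ is real.

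The argument is short and the only non-trivial input is the \L{}ojasiewicz-type property (Proposition 2.8); without that estimate one would be stuck, because purely algebraically a radical ideal need not be real. The whole subtlety is packaged into the fact that regulous functions satisfy this \L{}ojasiewicz inequality, which lets us absorb a sum of squares (which controls the joint zero set) into each individual factor up to a power.
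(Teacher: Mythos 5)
Your proof is correct, but it takes a genuinely different route from the paper. You reduce everything to the \L{}ojasiewicz property (Proposition 2.8): since $\mathcal{Z}\bigl(\sum_{j} a_{j}^{2}\bigr)\subseteq\mathcal{Z}(a_{i})$, you obtain $a_{i}^{M}=\bigl(\sum_{j}a_{j}^{2}\bigr)h$ for some $M$ and some regulous $h$, whence $a_{i}^{M}\in I$ and radicality finishes the job. The paper instead argues explicitly: it notes that $\dfrac{a_{i}^{3+k}}{a_{1}^{2}+\cdots+a_{n}^{2}}$ is itself $k$\textrm{\hyp{}}regulous, as the composition (Remark 2.7) of the regulous map $x\mapsto(a_{1}(x),\dots,a_{n}(x))$ with the model function of Example 2.4, so that $a_{i}^{3+k}\in I$ directly with the explicit exponent $M=3+k$ and an explicit cofactor $h$. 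In effect the paper exhibits by hand the particular instance of the \L{}ojasiewicz division you invoke as a black box. Your version is shorter and would generalize to any function ring satisfying such a \L{}ojasiewicz-type property, at the cost of leaning on the deeper cited result Proposition 2.8; the paper's version is more elementary and self-contained, needing only closure under composition and one explicit example. Both arguments are complete and valid within the paper's framework.
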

\begin{proof}
Suppose that $f_{1}^{2}+...+f_{m}^{2}\in I$ with  $f_{1}, ..., f_{m}\in \mathcal{R}^{k}\left( \mathbb{R}^{N}\right)$. Observe that for each $i=1, ..., m$ we have 
$$\dfrac{f_{i}^{3+k}}{f_{1}^{2}+...+f_{m}^{2}}\in\mathcal{R}^{k}\left( \mathbb{R}^{N}\right).$$
This is due to the fact that following mappings are $k\textrm{\hyp{}}$regulous: $$\mathbb{R}^{N}\ni x\mapsto(f_{1}(x), ..., f_{m}(x))\in\mathbb{R}^{m},$$
$$\mathbb{R}^{m}\ni (x_{1}, ..., x_{m})\mapsto\dfrac{x_{i}^{3+k}}{x_{1}^{2}+...+x_{m}^{2}}\in\mathbb{R}$$ and by \textit{Remark} 2.7 their composition is also $k\textrm{\hyp{}}$regulous. Thus $$f_{i}^{3+k}=\dfrac{f_{i}^{3+k}}{f_{1}^{2}+...+f_{m}^{2}}\cdot(f_{1}^{2}+...+f_{m}^{2})\in I.$$ Because $I$ is radical, we obtain $f_{i}\in I$.
\end{proof}
We still need the following version of the formal Positivstellensatz.
\begin{prop}
Let $A$ be a commutative ring (considered with $ \sum A^{2}$ cone). Let $f,g \in A$. Then the following are equivalent:
\begin{enumerate}[i)]
\item There is no homomorphism $\phi:A\rightarrow R$ into a real closed field $R$, such that $$\phi(g) \neq 0, \phi(f)=0.$$
\item There exist $h, h_{i}\in A$, $i=1,...,m$, $e\in \mathbb{N}$ such that $$\sum_{i\leq m}h^{2}_{i}+g^{2e}+fh=0.$$
\end{enumerate} 
\end{prop}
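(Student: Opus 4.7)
The implication $(ii)\Rightarrow(i)$ is the easy direction: given such an identity and a hypothetical homomorphism $\phi:A\rightarrow R$ with $\phi(f)=0$ and $\phi(g)\neq 0$, applying $\phi$ yields $\sum_{i}\phi(h_{i})^{2}+\phi(g)^{2e}=0$ in the real closed field $R$. Since a sum of squares in $R$ vanishes only when each summand vanishes, $\phi(g)^{2e}=0$ and hence $\phi(g)=0$, a contradiction.

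For the converse $(i)\Rightarrow(ii)$, the plan is to package the absence of such homomorphisms as the absence of prime cones in a suitable auxiliary ring, and then invoke Theorem 3.5. Specifically, introduce the ring
$$B \ := \ A_{g}/fA_{g} \ = \ (A/(f))\bigl[1/\bar g\bigr],$$
i.e.\ the localization of $A/(f)$ at the image of $g$. Any homomorphism $\phi:A\rightarrow R$ into a real closed field with $\phi(f)=0$ and $\phi(g)\neq 0$ factors uniquely through $B$, and conversely every homomorphism $B\rightarrow R$ arises in this way. Thus $(i)$ is equivalent to saying that $B$ admits no homomorphism to any real closed field, which by Theorem 3.5 is equivalent to $-1\in\sum B^{2}$.

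It then remains to translate this relation back into an identity in $A$ of the prescribed shape. Writing $-1=\sum_{i}(c_{i}/\bar g^{n_{i}})^{2}$ in $B$ with $c_{i}\in A$, and clearing denominators by multiplying by $\bar g^{2n}$ for $n:=\max_{i}n_{i}$, one obtains $\bar g^{2n}+\sum_{i}\bar d_{i}^{\,2}=0$ in $B$, where $d_{i}:=c_{i}g^{n-n_{i}}\in A$. Lifting this equality from $B=A_{g}/fA_{g}$ back to $A$ absorbs one further multiplication by some power $g^{m}$ (to kill the kernel of $A\to A_{g}$, which consists of elements annihilated by a power of $g$) together with a term in $fA$, producing an identity of the form
$$g^{m+2n}+g^{m}\sum_{i}d_{i}^{\,2}+fu \ = \ 0$$
in $A$ for some $u\in A$. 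Multiplying once more by $g$ if $m$ is odd makes the exponent even; then writing $m$ (or $m+1$) as $2s$, and setting $e:=s+n$, $h_{i}:=g^{s}d_{i}$, $h$ equal to the appropriate scalar multiple of $u$, yields the desired identity $\sum_{i}h_{i}^{2}+g^{2e}+fh=0$.

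The conceptual step is the reduction to Theorem 3.5 via $B$; the main obstacle is purely bookkeeping in the last step, namely tracking the kernel of the localization $A\to A_{g}$ and the parity of the resulting power of $g$ so that the final identity lives in $A$ with an even exponent on $g$ and all squared terms genuinely in $A$. (Note that the degenerate case $B=0$, which occurs when $g$ is nilpotent modulo $(f)$, is covered by the same argument with an empty sum of squares.)
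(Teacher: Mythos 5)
Your proposal is correct and follows essentially the same route as the paper: both reduce $(i)$ to the non-existence of homomorphisms from the localization $(A/(f))[1/\bar g]$ into a real closed field, invoke Theorem 3.5 to get $-1$ as a sum of squares there, and clear denominators to land back in $A$. If anything, you are more explicit than the paper about the bookkeeping in the lifting step (the kernel of $A\rightarrow A_{g}$ and the parity of the resulting power of $g$), and your parenthetical on the degenerate case $B=0$ matches the paper's separate treatment of $g\in\sqrt{(f)}$.
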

\begin{proof}
$i)\Rightarrow ii)$ We may assume $g\notin \sqrt{(f)}$, otherwise $g^{e}=fh_{1}$, for some exponent $e$ and $h_{1}\in A$, hence $g^{2e}+f\cdot(-fh_{1}^{2})=0$. So suppose $g^{e}\notin (f)$ for all $e\in \mathbb{N}$. Let $A_{1}:=A/(f)$ and $G$ be the multiplicative monoid generated by $g$. Consider $A_{2}=\overline{G}^{-1}A_{1}$ (where the bar denotes the image in $A_{1}$). Condition $i)$ implies that there is no homomorphism $\Psi :A_{2}\rightarrow R$ into a real closed field $R$. Hence, by Theorem 3.5, we can find $\delta_{1},...,\delta_{n}\in A_{2}$ such that $-1=\delta_{1}^{2}+...+\delta_{n}^{2}$. Put $\delta_{i}=\dfrac{\overline{a_{i}}}{\overline{g^{e_{i}}}}$, where $a_{i}\in A$, $e_{i}\in \mathbb{N}$. Then $-1=\dfrac{\overline{a_{1}^{2}}}{\overline{g^{2e_{1}}}}+...+\dfrac{\overline{a_{n}^{2}}}{\overline{g^{2e_{n}}}}$. Clearing denominators we obtain $\overline{g^{2e_{1}}}\cdot...\cdot \overline{g^{2e_{n}}}+\overline{p}=0$, where $p \in \sum A^{2} $. We have $g^{2(e_{1}+...+e_{n})}+p \in (f)$, and there exists $h \in A$ such that $g^{2(e_{1}+...+e_{n})}+p+fh=0$.\newline
$ii)\Rightarrow i)$ If there is a homomorphism $\phi:A\rightarrow R$ into a real closed field $R$, satisfying the conditions of $i)$, that is $\phi(g) \neq 0, \phi(f)=0$, then it must be $\phi(p+g^{2e}+fh)>0$, for any $p \in \sum A^{2}, e\in \mathbb{N}$, since $\phi(p)\geq 0, \phi(g^{2e}) > 0, \phi(fh)=0$.      
\end{proof}
\begin{rem}
The above statement in much more general case can be found in [1,~Proposition~4.4.1].
\end{rem}
We can now prove the main result of this section, the Nullstellensatz for $\mathcal{R}^{k}\left( \mathbb{R}^{N}\right)$. 
\begin{thm}
Let $(k,N)\in\mathbb{N}\times\mathbb{N}^{\ast}$. Let $I\subseteq\mathcal{R}^{k}\left( \mathbb{R}^{N}\right)$ be an ideal. Then $$\sqrt{I}=\mathcal{I}(\mathcal{Z}(I))$$.
\end{thm}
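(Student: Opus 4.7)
The plan is to prove the nontrivial inclusion $\mathcal{I}(\mathcal{Z}(I)) \subseteq \sqrt{I}$; the reverse inclusion is immediate, since $f^{m} \in I$ forces $f$ to vanish on $\mathcal{Z}(I)$. As a reduction I would first replace $I$ by $\sqrt{I}$, which changes neither side of the claimed equality; by Lemma 3.14 the new ideal is real.

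Given $g \in \mathcal{I}(\mathcal{Z}(I))$, I repeat the compression trick from the proof of Proposition 3.13: by noetherianity of the regulous topology, pick finitely many $f_{1}, \dots, f_{n} \in I$ with $\mathcal{Z}(I) = \mathcal{Z}(f_{1}) \cap \cdots \cap \mathcal{Z}(f_{n})$, and set $f = f_{1}^{2} + \cdots + f_{n}^{2} \in I$, so that $\mathcal{Z}(f) = \mathcal{Z}(I)$. The hypothesis on $g$ then says that the semi-algebraic set
$$S = \{x \in \mathbb{R}^{N} \mid f(x) = 0,\ g(x)^{2} > 0\}$$
is empty.

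Next I would invoke the Artin--Lang property (Proposition 3.11) to deduce that the corresponding subset $\widetilde{S}$ of $\mathrm{Spec}_{r}\,\mathcal{R}^{k}(\mathbb{R}^{N})$ is empty. Translating via the correspondence between prime cones and homomorphisms into real closed fields (Proposition 3.7), this says that there is no homomorphism $\phi: \mathcal{R}^{k}(\mathbb{R}^{N}) \to R$ into a real closed field with $\phi(f) = 0$ and $\phi(g) \neq 0$ (using that $\phi(g)^{2} > 0 \Leftrightarrow \phi(g) \neq 0$ in $R$). The formal Positivstellensatz (Proposition 3.15) then furnishes elements $h, h_{1}, \dots, h_{m} \in \mathcal{R}^{k}(\mathbb{R}^{N})$ and $e \in \mathbb{N}$ with
$$(g^{e})^{2} + \sum_{i=1}^{m} h_{i}^{2} + fh = 0,$$
so that $(g^{e})^{2} + \sum_{i} h_{i}^{2} = -fh \in I$. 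Because $I$ is real, the definition of a real ideal yields $g^{e} \in I$, hence $g \in \sqrt{I}$.

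The only mildly delicate point is the passage from the emptiness of $S$ to the nonexistence of a homomorphism $\phi$ with $\phi(f) = 0$ and $\phi(g) \neq 0$, which requires encoding ``$\neq 0$'' as ``$(\cdot)^{2} > 0$'' so that Proposition 3.11 applies as stated and then matches the hypothesis of Proposition 3.15. Apart from this, the argument is a direct assembly of the preliminary results, with noetherianity providing the reduction to a single defining function $f$ and Lemma 3.14 ensuring that the ``real'' radical produced by the Positivstellensatz actually lies inside the ordinary radical.
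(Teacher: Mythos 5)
Your proof is correct and follows essentially the same route as the paper's: noetherianity of the regulous topology reduces to a single $f$ with $\mathcal{Z}(f)=\mathcal{Z}(I)$, the Artin--Lang property plus the formal Positivstellensatz place $g$ in the real radical of $(f)$, and the realness of radical ideals finishes; your only deviations are the harmless preliminary replacement of $I$ by $\sqrt{I}$ and the (welcome) explicit encoding of $g\neq 0$ as $g^{2}>0$ so that the Artin--Lang statement applies verbatim. Note only that your cross-references are off by one: the weak Nullstellensatz is Proposition 3.12, realness of radical ideals is Lemma 3.13, and the formal Positivstellensatz is Proposition 3.14.
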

\begin{proof}
Only the "$\supseteq$" inclusion is nontrivial. Again by the noetherianity of the regulous topology, $\mathcal{Z}(I)=\mathcal{Z}(f)$, for some $f\in I$. Let $g\in \mathcal{I}(\mathcal{Z}(I))$. Obviously $\mathcal{Z}(f)\subseteq\mathcal{Z}(g)$.  By hypothesis $$\lbrace x\in \mathbb{R}^{N} \mid f(x)=0,~g(x) \neq 0 \rbrace = \emptyset.$$ Then by the Artin--Lang property (Proposition 3.11) $$\lbrace \alpha\in \mathrm{Spec_{r}} \mathcal{R}^{k}\left( \mathbb{R}^{N}\right)\mid f(\alpha)=0,~g(\alpha)\neq 0    \rbrace = \emptyset.$$
This means that there is no homomorphism $\phi:\mathcal{R}^{k}\left( \mathbb{R}^{N}\right)\rightarrow R$ into a real closed field $R$, such that $\phi(f)=0,~\phi(g) \neq 0$. By the formal Positivstellensatz (Proposition 3.14), there exist $h, h_{i}\in \mathcal{R}^{k}\left( \mathbb{R}^{N}\right)$, $i=1,...,m$, $e\in \mathbb{N}$, such that $$\sum_{i\leq m}h^{2}_{i}+g^{2e}+fh=0.$$ Hence $g$ belongs to the real radical of the ideal $(f)$. By Lemma 3.13, we get $\sqrt[r]{(f)}=\sqrt{(f)}\subseteq\sqrt{I}$.      
\end{proof}

\begin{cor}
Let $(k,N)\in\mathbb{N}\times\mathbb{N}^{\ast}$. Any maximal ideal $\mathfrak{M}$ in $\mathcal{R}^{k}\left( \mathbb{R}^{N}\right)$ is of the form $$\mathfrak{M}=\mathfrak{M}_{a}:=\lbrace f\in\mathcal{R}^{k}\left( \mathbb{R}^{N}\right)\mid f(a)=0 \rbrace$$ for an unique $a\in\mathbb{R}^{N}$. 
\end{cor}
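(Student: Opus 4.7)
The plan is to derive this corollary directly from the weak Nullstellensatz (Proposition 3.12), which does essentially all the work; the rest of the argument is purely formal. There are two things to establish: the existence of a point $a \in \mathbb{R}^{N}$ with $\mathfrak{M} = \mathfrak{M}_{a}$, and the uniqueness of such a point.

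For existence, I would begin by observing that $\mathfrak{M}$ is a proper ideal, since maximal ideals are by definition proper. The weak Nullstellensatz then yields $\mathcal{Z}(\mathfrak{M}) \neq \emptyset$; otherwise $\mathfrak{M}$ would contain $1$. Pick any point $a \in \mathcal{Z}(\mathfrak{M})$. By the very definition of $\mathcal{Z}(\mathfrak{M})$, every $f \in \mathfrak{M}$ satisfies $f(a) = 0$, so $\mathfrak{M} \subseteq \mathfrak{M}_{a}$. The ideal $\mathfrak{M}_{a}$ is itself proper because the constant function $1$ does not vanish at $a$, so $1 \notin \mathfrak{M}_{a}$. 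Maximality of $\mathfrak{M}$ then forces the equality $\mathfrak{M} = \mathfrak{M}_{a}$.

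For uniqueness, I would argue by contradiction. Suppose $\mathfrak{M}_{a} = \mathfrak{M}_{b}$ with $a \neq b$ in $\mathbb{R}^{N}$. Then there is an index $i$ with $a_{i} \neq b_{i}$. The coordinate polynomial $p(x) := x_{i} - a_{i}$ is regular, hence in particular $k\textrm{\hyp{}}$regulous, and lies in $\mathfrak{M}_{a}$ since $p(a) = 0$. But $p(b) = b_{i} - a_{i} \neq 0$, so $p \notin \mathfrak{M}_{b}$, contradicting the assumed equality of ideals. Hence $a = b$.

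The only step with genuine mathematical content is the appeal to Proposition 3.12 to produce a common zero of $\mathfrak{M}$; the remainder is bookkeeping and a standard separation argument using linear polynomials. Accordingly, I do not expect any real obstacle here, as the heavy lifting has already been carried out in the preceding Nullstellensatz machinery of this section.
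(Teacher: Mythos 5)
Your proposal is correct and follows essentially the same route as the paper: weak Nullstellensatz gives a point $a\in\mathcal{Z}(\mathfrak{M})$, hence $\mathfrak{M}\subseteq\mathfrak{M}_{a}$, and maximality forces equality. You additionally spell out the uniqueness of $a$ via the separating linear polynomial $x_{i}-a_{i}$, a routine step the paper leaves implicit even though its statement asserts uniqueness.
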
  
\begin{proof}
Let $\mathfrak{M} \subseteq \mathcal{R}^{k}\left( \mathbb{R}^{N}\right)$ be a maximal ideal. Since $\mathfrak{M}$ is proper, then $\mathcal{Z}(\mathfrak{M})\neq \emptyset$ by weak Nullstellensatz. Let $a\in \mathcal{Z}(\mathfrak{M})$, then we have $\mathfrak{M}\subseteq\mathfrak{M}_{a}$, but the inclusion cannot be proper by maximality of $\mathfrak{M}$.
\end{proof}
\begin{rem}
Of course for any $a\in \mathbb{R}^{N}$ the ideal $\mathfrak{M}_{a}$ is always maximal. Above corollary simply means that if $I$ is an ideal in the ring $\mathcal{R}^{k}\left( \mathbb{R}^{N}\right)$, then $I$ is maximal if and only if it is of the form $I=\mathfrak{M}_{a}$, for some $a\in\mathbb{R}^{N}$.     
\end{rem}

\section{Maximal ideals in the ring $\displaystyle \mathcal{R}^{k}\left( \mathbb{R}^{N}\right)$ are not finitely generated}
We begin this section with a basic lemma from which the main problem follows in the natural way. For clarity we state the following definition.

\begin{defn} Let $a \in \mathbb{R}^{N}$. For $p \in \mathbb{R}[x_{1},...,x_{N}]$, one has the unique decomposition into finite sum $\displaystyle p=\sum_{i\geq 0}p_{i}$ where $p_i$ is a homogeneous polynomial of degree $i$ with respect to $(x_{1}-a_{1}), ..., (x_{N}-a_{N})$. The smallest integer $s$ such that $p_{s}\neq0$ is order of $p$ at $a$, namely $s=\ord_{a}(p)$, in this case we call $p_{s}$ the initial form of $p$. This decomposition is called the homogeneous decomposition of $p$ at $a$, and the $p_{i}$'s are the homogeneous components.
\newline When $a=O$, the polynomial $p_{i}$ is either zero or homogeneous of degree $i$.
\end{defn}
  
\begin{lem}
Assume $(k,N)\in\mathbb{N}\times\mathbb{N}^{\ast}$, $g\in\mathcal{R}^{k}\left( \mathbb{R}^{N}\right)$. Write $g=\dfrac{p}{q}$, $q \neq 0$ on $\dom(g)$ with $p, q \in \mathbb{R}[x_{1},...,x_{N}]$. Let $p_{n}$ (resp. $q_{m}$) be the initial form of $p$ (resp.~$q$), hence $n=\ord_{O}(p)$ and $m=\ord_{O}(q)$. Then
\begin{enumerate}[i)]
\item $n\geq m$ and $m$ is even. Moreover $g(O)=0$ if and only if $n>m$.
\item We can write $g=h+G$ where $h\in\mathcal{R}^{k}\left( \mathbb{R}^{N}\right)$ is $k\textrm{\hyp{}}$flat at $O$ and $G \in \mathbb{R}[x_{1},...,x_{N}]$ such that $\deg G\leq k$.  
\item If $g$ is $k\textrm{\hyp{}}$flat at $O$, then $n=\deg p_{n}> \deg q_{m}+k=m+k$. 
\end{enumerate}  
\end{lem}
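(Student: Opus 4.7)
The plan is to exploit the identity $p=qg$ near $O$ together with a Taylor analysis. I would assume throughout that $p$ and $q$ are chosen coprime, so that Proposition 2.4 applies and gives $\mathrm{codim}_{\mathbb{R}^{N}}\mathcal{Z}(q)\geq 2$.

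For part (i), the inequality $n\geq m$ comes from the bound $|p(x)|\leq M\cdot|q(x)|=O(|x|^{m})$ valid on a neighborhood of $O$, where $M$ is any bound on the continuous function $|g|$ there; since $p$ is a polynomial, its initial order must be at least $m$. Fixing any $v$ with $q_{m}(v)\neq 0$, the expansion
\[
g(tv)=\frac{p_{n}(v)}{q_{m}(v)}\,t^{n-m}\bigl(1+O(t)\bigr)\qquad(t\to 0)
\]
settles the equivalence $g(O)=0\Leftrightarrow n>m$ by letting $t\to 0$. For the evenness of $m$ I would argue by contradiction: if $m$ were odd, then $q_{m}(-v)=-q_{m}(v)$ shows that the initial form $q_{m}$ takes both signs, so along suitable rays emanating from $O$ the function $q=q_{m}+(\text{higher})$ takes both signs arbitrarily near $O$; but by Proposition 2.4 the complement $\mathbb{R}^{N}\setminus\mathcal{Z}(q)$ is connected (the case $N=1$ being vacuous since then $\mathcal{Z}(q)=\emptyset$), and a nonvanishing continuous function on a connected set has constant sign, contradiction.

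Part (ii) is immediate from Taylor's theorem. I would set $G$ equal to the Taylor polynomial of $g$ at $O$ of degree $k$; then $\deg G\leq k$, hence $G\in\mathcal{R}^{k}(\mathbb{R}^{N})$, and $h:=g-G\in\mathcal{R}^{k}(\mathbb{R}^{N})$ has, by construction, all partial derivatives of order $\leq k$ vanishing at $O$, i.e.\ is $k$-flat at $O$.

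For part (iii), I would use the Peano form of Taylor's theorem: $k$-flatness of $g$ at $O$ means $g(x)=o(|x|^{k})$ as $x\to O$. Combining with $|q(x)|=O(|x|^{m})$ and $p=qg$ gives $p(x)=o(|x|^{m+k})$. Since the initial form $p_{n}$ is nonzero, choosing $v$ with $p_{n}(v)\neq 0$ yields $|p(tv)|\sim |p_{n}(v)|\,t^{n}$, which is incompatible with $o(t^{m+k})$ unless $n>m+k$; this is precisely $\deg p_{n}>\deg q_{m}+k$.

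The main obstacle is the parity assertion in (i): it depends crucially on the coprimality reduction, since for a non-coprime representation $m$ can be odd (e.g.\ $x=x^{2}/x$ on $\mathbb{R}$ gives $m=1$). Once the coprime reduction and the connectedness input from Proposition 2.4 are in hand, the remaining assertions are routine Taylor analysis of the equation $p=qg$.
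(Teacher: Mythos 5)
Your proof is correct and follows essentially the same route as the paper's: the Taylor polynomial decomposition for (ii), order comparison of $p$ and $q$ along a generic direction for (i) and (iii), and a sign-change argument for the evenness of $m$ resting on the codimension bound $\mathrm{codim}_{\mathbb{R}^{N}}\mathcal{Z}(q)\geq 2$ (Proposition 2.5 in the paper's numbering, not 2.4). The only cosmetic difference is that the paper gets the parity contradiction by quoting the fact that a sign-changing polynomial has a codimension-one zero set, whereas you deduce the same contradiction from connectedness of the complement of a codimension $\geq 2$ algebraic set; both arguments are interchangeable here.
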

\begin{proof} Consider a generic line $l$ through $O$, for which $q_{m}\mid_{l}\neq 0$, parametrized by $$t\mapsto\left(t, \alpha_{1}t, \alpha_{2}t, ..., \alpha_{N-1}t\right)$$
\begin{enumerate}[i)]
\item The sign of $q$ on $l$ in a neighbourhood of $O$ depends only on its initial form $q_{m}$. Suppose that $m$ is odd, then passing through $O$, $q_{m}$ changes sign on $l$, so does $q$. But it follows from [1, Theorem 4.5.1] that $\textrm{codim}_{\mathbb{R}^{N}}\mathcal{Z}(q)=1$ which is a contradiction with Proposition 2.5.\newline Next, observe that $g(O)$ depends only on $ \displaystyle t^{n-m}\dfrac{p_{n}\left(1, \alpha_{1}, \alpha_{2}, ..., \alpha_{N-1}\right)}{q_{m}\left(1, \alpha_{1}, \alpha_{2}, ..., \alpha_{N-1}\right)}$ as $t\rightarrow0$.
\item It suffices to consider the Taylor polynomial of degree $k$, $G(x):= T_{O}^{k}g(x)$ of $g(x)$ at $O$ and then take $h:=g-G$.
\item Because $\restr{g}{l}$ is also $k\textrm{\hyp{}}$flat at $O$, then $\ord_{O}(\restr{p}{l})>m+k$ on a generic line $l$. But also on a generic line $l$ we have $n=\ord_{O}(p)=\ord_{O}(\restr{p}{l})$. 
\end{enumerate}
\end{proof}
It is well known that the ring $\mathcal{R}^{k}\left( \mathbb{R}^{N}\right)$, $N\geq2$, is not noetherian (cf. [2, Proposition~4.16]). Unfortunately the method used in [2] does not lead to Theorem 4.3. Using different approach we prove below that in fact maximal ideals of $\mathcal{R}^{k}\left( \mathbb{R}^{N}\right)$ are not finitely generated.

\begin{thm}
Let $(k,N)\in\mathbb{N}\times\mathbb{N}^{\ast}$. The maximal ideals of the ring of $k\textrm{\hyp{}}$regulous functions $\mathcal{R}^{k}\left( \mathbb{R}^{N}\right)$ are not finitely generated for $N\geq2$.   
\end{thm}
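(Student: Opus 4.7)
By Corollary 3.16 and translation invariance of $\mathcal{R}^k(\mathbb{R}^N)$, it suffices to show $\mathfrak{M}_O$ is not finitely generated. The strategy is to produce a sequence of elements $f_r \in \mathfrak{M}_O$ whose $(k+1)$-st order behavior at $O$ along lines requires denominators of arbitrarily high order, while a finitely generated ideal can only produce such behavior with a fixed-order denominator. Assume for contradiction $\mathfrak{M}_O = (g_1, \ldots, g_s)$. Apply Lemma 4.2 ii) to write each $g_i = h_i + G_i$ with $h_i$ $k$-flat at $O$ and $G_i \in \mathbb{R}[x_1, \ldots, x_N]$ of degree $\leq k$, $G_i(O) = 0$. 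Writing $h_i = \tilde p_i/\tilde q_i$ in lowest terms with initial forms $\pi_i$ and $\kappa_i$ of respective degrees $n_i = \ord_O \tilde p_i$ and $m_i = \ord_O \tilde q_i$, Lemma 4.2 iii) gives $n_i \geq m_i + k + 1$; the initial forms $\kappa_i$ form a fixed finite collection of polynomials.

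The test elements are, for each $r \geq 1$,
\[
f_r(x) := \frac{x_1^{2r+k+3}}{(x_1^2 + \cdots + x_N^2)^{r+1}},
\]
which one checks (by writing $f_r = x_1^{k+1}(x_1^2/\|x\|^2)^{r+1}$ and bounding each partial of order $\leq k$ by $O(\|x\|^{k+1-|\alpha|})$, in the style of Example 2.4) belongs to $\mathcal{R}^k(\mathbb{R}^N)$ and is $k$-flat at $O$, with explicit restriction $f_r(t\beta) = t^{k+1}\beta_1^{2r+k+3}/\|\beta\|^{2(r+1)}$ on any line $\ell_\beta: t \mapsto t\beta$. Supposing $f_r = \sum_i a_i g_i$, we restrict both sides to $\ell_\beta$ for $\beta$ in a Zariski-dense open set. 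By Lemma 4.2 i) applied to each $a_i$ in lowest terms, the restrictions $a_i(t\beta)$ and $g_i(t\beta)$ are rational functions of $t$ without pole at $t = 0$, hence extend to $\mathcal{C}^\infty$ germs there, and so their Taylor coefficients in $t$ are well-defined to all orders.

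Extracting the coefficient of $t^{k+1}$ — and noting that $h_i(t\beta) = O(t^{n_i - m_i})$ with $n_i - m_i \geq k+1$ — yields an identity in $\mathbb{R}(\beta_1, \ldots, \beta_N)$ of the form
\[
\frac{\beta_1^{2r+k+3}}{\|\beta\|^{2(r+1)}} \;=\; P(\beta) \;+\; \sum_{i \in I} a_i(O)\, \frac{\pi_i(\beta)}{\kappa_i(\beta)},
\]
where $I := \{i : n_i - m_i = k+1\}$ and $P \in \mathbb{R}[\beta_1, \ldots, \beta_N]$ is a polynomial of degree $\leq k+1$ coming from the polynomial Taylor coefficients of the $a_i$'s up to order $k$ paired against the homogeneous components of the $G_j$'s. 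Clearing denominators with $D(\beta) := \prod_{i \in I} \kappa_i(\beta)$ and cross-multiplying gives $\|\beta\|^{2(r+1)} \mid \beta_1^{2r+k+3}\, D(\beta)$ in $\mathbb{R}[\beta_1, \ldots, \beta_N]$. Since $N \geq 2$, the form $\|\beta\|^2$ is irreducible over $\mathbb{R}$ and coprime to $\beta_1$, so $\|\beta\|^{2(r+1)}$ must divide the fixed polynomial $D(\beta)$; but $\deg D$ is bounded independently of $r$ while $\deg \|\beta\|^{2(r+1)} = 4(r+1) \to \infty$, contradiction for large $r$. The main technical obstacle is the Taylor extraction at order $k+1$: even though $a_i$ is only $\mathcal{C}^k$ on $\mathbb{R}^N$, Lemma 4.2 i) forces its restriction to a generic line through $O$ to be smooth at the origin, which is what legitimizes the order-$(k+1)$ computation.
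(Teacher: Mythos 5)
Your proof is correct and follows the same core strategy as the paper's: split each generator (and, implicitly, each coefficient) into a $k$-flat part plus a Taylor polynomial via Lemma 4.2, restrict a putative representation $f=\sum a_i g_i$ to a generic line through $O$, and extract the coefficient of $t^{k+1}$, observing that the flat parts of the generators can only contribute through the finitely many initial-form quotients $\pi_i/\kappa_i$ while everything else contributes a polynomial in the direction vector. The one genuine difference is the endgame. The paper uses a \emph{single} test function $x_1^{3+k}/(a_1x_1^2+\cdots+a_Nx_N^2)$ with the coefficients $a_j>0$ chosen generically so that $a_1x_1^2+\cdots+a_Nx_N^2$ divides no $\kappa_i$, and the contradiction is exactly that coprimality; you instead use the sequence $f_r$ with denominators $\|x\|^{2(r+1)}$ of unbounded degree and obtain the contradiction from a degree count, since $(\|\beta\|^2)^{r+1}$ would have to divide the fixed polynomial $\prod_{i\in I}\kappa_i$. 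Your variant trades the generic choice of denominator for the verification that a whole family of homogeneous quotients is $k$-regulous and $k$-flat; your homogeneity bound $|\partial^{\alpha}f_r|=O(\|x\|^{k+1-|\alpha|})$ handles that correctly. Two small points: $\deg\|\beta\|^{2(r+1)}=2(r+1)$, not $4(r+1)$ (immaterial, as either tends to infinity); and the step where $P(\beta)$ is asserted to be a polynomial deserves one explicit sentence — only the Taylor coefficients of $a_i(t\beta)$ of order $\le k$ can pair with the components of $G_j$ because $\ord_O G_j\ge 1$, and those coefficients are polynomials in $\beta$ by the $\mathcal{C}^k$ regularity of $a_i$, whereas higher-order coefficients of $a_i(t\beta)$ would be genuinely rational in $\beta$ with $r$-dependent denominators and would wreck the divisibility argument. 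Your phrase about the ``polynomial Taylor coefficients up to order $k$'' shows you saw this, but it is the one place a reader could stumble.
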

\begin{proof}
Fix $N\geq 2$, $k\in\mathbb{N}$. Let $\mathfrak{M}$ be a maximal ideal of $\mathcal{R}^{k}\left( \mathbb{R}^{N}\right)$. By Corollary 3.17, $\mathfrak{M}$ is of the form $\mathfrak{M}=\mathfrak{M}_{a}$ for a point $a\in\mathbb{R}^{N}$. Clearly, we may assume that $a=O$.   

 Suppose the contrary and assume that there exist a finite set of generators $\widetilde{f}_{1},...,\widetilde{f}_{r}\in\mathfrak{M}_{O}$. We can write $\widetilde{f}_{i}=f_{i}+F_{i}$ with some $f_{i}$ $k\textrm{\hyp{}}$flat at $O$, and $F_{i}$ a polynomial with $\textrm{ord}_{O}F_{i}\geq 1$. For each $i=1,...,r$ write $f_{i}$ in the form 
$f_{i}(x)=\dfrac{p_{i}(x)}{q_{i}(x)}$, $q_{i}(x)\neq0$ for all $x\in \mathrm{dom}(f_{i})$, where $p_{i}, q_{i}$ are polynomials in $ \mathbb{R}[x_{1},...,x_{N}]$. Denote by $p_{i, n_{i}}, q_{i, m_{i}}$ the initial form of $p_{i}, q_{i}$ respectively. Then $f_{i}=\dfrac{p_{i, n_{i}}+hot_{p_{i}}}{q_{i, m_{i}}+hot_{q_{i}}}$, where $hot_{p_{i}}, hot_{q_{i}}$ are sums of homogeneous components of $p_{i}, q_{i}$ of degrees greater than $n_{i}, m_{i}$ respectively. 
 Define a $k\textrm{\hyp{}}$regulous function $f$ by putting $$f\left( x_{1}, x_{2}, ...,x_{N} \right) = \dfrac{x_{1}^{3+k}}{a_{1}x_{1}^{2}+a_{2}x_{2}^{2}+...+a_{N}x_{N}^{2}}$$ away from the origin, where we choose $a_{1}, a_{2},..., a_{N}$ to be positive real numbers such that none of $q_{i,m_{i}}$ is divisible by $a_{1}x_{1}^{2}+a_{2}x_{2}^{2}+...+a_{N}x_{N}^{2}$. \newline Clearly $f\in \mathfrak{M}_{O}$ and therefore there are $\widetilde{g}_{1},...,\widetilde{g}_{r}\in\mathcal{R}^{k}\left( \mathbb{R}^{N}\right)$, such that: $$f=\widetilde{g}_{1}\widetilde{f}_{1}+\widetilde{g}_{2}\widetilde{f}_{2}+...+\widetilde{g}_{r}\widetilde{f}_{r}.$$ Again write $\widetilde{g}_{i}=g_{i}+G_{i}$ with some $g_{i}$ $k\textrm{\hyp{}}$flat at $O$, and $G_{i}$ a polynomial. Then:
\begin{equation} 
f=\sum_{i\leq r}\left (g_{i}+G_{i}\right )\left (f_{i}+F_{i}  \right ) 
\end{equation} 
and after routine calculation we obtain $\displaystyle \textrm{ord}_{O} \sum_{i\leq r}G_{i}F_{i}\geq k+1$.       
\newline Let us now consider a generic line $\textit{l}$ in $ \mathbb{R}^{N}$ parametrized by $t\mapsto\left(t, \alpha_{1}t, \alpha_{2}t, ..., \alpha_{N-1}t\right)$. By equality (4.4), on a given line we have: 
$$\dfrac{t^{k+1}}{a_{1}+a_{2}\alpha_{1}^{2}+...+a_{N}\alpha _{N-1}^{2}}=\sum_{i\leq r}\restr{(g_{i}f_{i})}{\textit{l}}+\sum_{i\leq r}\restr{(g_{i}F_{i})}{\textit{l}}+\sum_{i\leq r}\restr{(G_{i}f_{i})}{\textit{l}}+\sum_{i\leq r}\restr{(G_{i}F_{i})}{\textit{l}}$$
\newline Next consider the situation at the origin, that is when $t\rightarrow 0$. Because $f_{i}$ are $k\textrm{\hyp{}}$flat at $O$, from lemma 4.2 it follows, that either $n_{i}=m_{i}+k+1$ and then $$\displaystyle \lim_{t\rightarrow 0}\dfrac{f_{i}\left(t, \alpha_{1}t, \alpha_{2}t, ..., \alpha_{N-1}t\right)}{t^{k+1}}=\dfrac{p_{i, n_{i}}\left(1, \alpha_{1}, \alpha_{2}, ..., \alpha_{N-1}\right)}{q_{i, m_{i}}\left(1, \alpha_{1}, \alpha_{2}, ..., \alpha_{N-1}\right)}$$\newline or $n_{i}> m_{i}+k+1$ and then the above limit is zero. \newline Since $\displaystyle \lim_{t\rightarrow 0}\dfrac{g_{i}\left(t, \alpha_{1}t, \alpha_{2}t, ..., \alpha_{N-1}t\right)}{t^{k}}=0$ and $\textrm{ord}_{O}F_{i}\geq 1$ we get $$\displaystyle \lim_{t\rightarrow 0}\dfrac{g_{i}\left(t, \alpha_{1}t, \alpha_{2}t, ..., \alpha_{N-1}t\right)F_{i}\left(t, \alpha_{1}t, \alpha_{2}t, ..., \alpha_{N-1}t\right)}{t^{k+1}}=0.$$ \newline
Let $\displaystyle H=\sum_{i\leq r}G_{i}F_{i}$. We have already noticed that  $\displaystyle \textrm{ord}_{O}H\geq k+1$, thus $$\lim_{t\rightarrow 0}\dfrac{H\left(t, \alpha_{1}t, \alpha_{2}t, ..., \alpha_{N-1}t\right)}{t^{k+1}}=h\left(\alpha_{1}, \alpha_{2}, ..., \alpha_{N-1}\right)$$ where $h$ is some polynomial in $ \mathbb{R}[x_{1},...,x_{N-1}]$. 
\newline Summing up, after dividing by $t^{k+1}$ and taking limits, we obtain
$$\dfrac{1}{a_{1}+a_{2}\alpha_{1}^{2}+...+a_{N}\alpha _{N-1}^{2}}=\sum_{i\leq r}G_{i}(O) \lim_{t\rightarrow 0}\dfrac{f_{i}\left(t, \alpha_{1}t, ..., \alpha_{N-1}t\right)}{t^{k+1}}+h\left(\alpha_{1}, \alpha_{2}, ..., \alpha_{N-1}\right).$$ But we have already seen that $\displaystyle \lim_{t\rightarrow 0}\dfrac{f_{i}\left(t, ..., \alpha_{N-1}t\right)}{t^{k+1}}=\dfrac{p_{i, n_{i}}\left(1, \alpha_{1}, \alpha_{2}, ..., \alpha_{N-1}\right)}{q_{i, m_{i}}\left(1, \alpha_{1}, \alpha_{2}, ..., \alpha_{N-1}\right)}$ or $0$. Hence the product $\displaystyle \prod_{i\leq r}q_{i, m_{i}}\left(1, x_{2}, x_{3}, ..., x_{N}\right)$ is divisible by $a_{1}+a_{2}x_{2}^{2}+...+a_{N}x_{N}^{2}$ in the ring $ \mathbb{R}[x_{2},...,x_{N}]$. \newline Then obviously $\displaystyle \sum_{i\leq r}m_{i} \geq 2$ and it follows that $a_{1}x_{1}^{2}+a_{2}x_{2}^{2}+...+a_{N}x_{N}^{2}$ divides the product $$\displaystyle \prod_{i\leq r}x_{1}^{m_{i}}q_{i, m_{i}}\left(1, \dfrac{x_{2}}{x_{1}}, ..., \dfrac{x_{N}}{x_{1}}\right)=\prod_{i\leq r}q_{i, m_{i}}\left(x_{1}, x_{2}, ..., x_{N}\right)$$ in the ring $ \mathbb{R}[x_{1},...,x_{N}]$, which is a contradiction.   
\end{proof}
We generalize the above result to an arbitrary nonsingular, real affine algebraic variety of dimension $d\geq2$. We will need the following result from the paper [4] which allows us to extend a regulous function from a smooth, closed subvariety to the ambient space.  

\begin{thm}
Let $X$ be an irreducible, smooth, real algebraic set $X \subset \mathbb{R}^{N}$. Let $\mathcal{I}(X)$ be the ideal 
of regulous functions on $\mathbb{R}^{N}$ vanishing on $X$. Then $$\mathcal{R}^{0}\left( X\right)\cong\mathcal{R}^{0}\left( \mathbb{R}^{N}\right)/\mathcal{I}(X)$$
\end{thm}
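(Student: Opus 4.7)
The plan is to split the claim into an easy injectivity statement and a substantive extension statement. I would first define the restriction map $\rho: \mathcal{R}^{0}(\mathbb{R}^{N}) \to \mathcal{R}^{0}(X)$, $f\mapsto f|_{X}$, and observe that its kernel coincides with $\mathcal{I}(X)$ by the very definition of that ideal. Hence $\rho$ descends to an injective $\mathbb{R}$-algebra homomorphism $\overline{\rho}: \mathcal{R}^{0}(\mathbb{R}^{N})/\mathcal{I}(X) \hookrightarrow \mathcal{R}^{0}(X)$, and the entire content of the theorem lies in showing that $\overline{\rho}$ is surjective: every continuous rational function on $X$ must admit a continuous rational extension to $\mathbb{R}^{N}$.

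Given $f\in\mathcal{R}^{0}(X)$, my strategy would be local-to-global. At each point $a\in X$, smoothness of $X$ combined with the implicit function theorem in the semi-algebraic setting produces a Zariski open neighborhood $U_{a}\subset\mathbb{R}^{N}$ together with a regulous retraction $\pi_{a}:U_{a}\to X\cap U_{a}$, essentially projection along an $(N-d)$-dimensional complement to the tangent space $T_{a}X$. Then $f\circ\pi_{a}$ is a continuous rational extension of $f|_{X\cap U_{a}}$ to $U_{a}$. Noetherianity of the constructible topology extracts a finite subcover $U_{a_{1}},\ldots,U_{a_{s}}$ of $X$. To glue the local extensions into one global regulous function, I would build a regulous partition of unity $\{\theta_{i}\}$ subordinate to $\{U_{a_{i}}\}\cup\{\mathbb{R}^{N}\setminus X\}$, with the $\theta_{i}$ modelled on Example 2.4, i.e., ratios of polynomial powers to strictly positive sums of squares. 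Proposition 2.8 (the \L{}ojasiewicz property) is the key algebraic tool for ensuring that the prescribed supports are achievable within $\mathcal{R}^{0}(\mathbb{R}^{N})$. Setting $\widetilde{f}:=\sum_{i}\theta_{i}\cdot(f\circ\pi_{a_{i}})$, extended by zero off the cover, should produce the required $\widetilde{f}\in\mathcal{R}^{0}(\mathbb{R}^{N})$ with $\widetilde{f}|_{X}=f$.

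The main obstacle will be twofold: first, producing genuinely regulous (rather than merely Nash) local retractions, since the implicit function theorem natively yields Nash parametrizations and one must refine these to rational ones using the algebraic structure of $X$; and second, constructing the regulous partition of unity itself, since continuous rational bump functions are much more rigid than smooth ones and a naive choice of the $\theta_{i}$ will introduce discontinuities or poles along the overlap boundaries. Overcoming both points should rely on smoothness of $X$ to furnish polynomial normal frames locally, combined with repeated use of Proposition 2.8 and the codimension bound of Proposition 2.5 to keep denominators algebraic and nonvanishing on the appropriate sets. Uniqueness of the extension modulo $\mathcal{I}(X)$ is automatic from the injectivity of $\overline{\rho}$, completing the argument once surjectivity has been secured.
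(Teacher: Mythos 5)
There is a genuine gap. First, note that the paper does not prove this statement at all: its ``proof'' is a citation to Koll\'ar--Nowak [4, Proposition 8 and Theorem 10], so the comparison is against that argument. Your reduction to surjectivity of $\overline{\rho}$ is the right skeleton (and is how [4] is organized), but already the well-definedness of $\rho$ is not ``by definition'': if $f=p/q\in\mathcal{R}^{0}(\mathbb{R}^{N})$ and $X$ happens to lie inside $\mathcal{Z}(q)$ (possible, since $\operatorname{codim}\mathcal{Z}(q)\geq 2$ only), it is not obvious that the continuous function $f|_{X}$ is again \emph{rational} on $X$. That is precisely the ``hereditarily rational'' property, which is part of what [4] proves for smooth $X$ and which your proposal silently assumes.

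The more serious problem is the surjectivity mechanism. The implicit function theorem (in its Nash/semialgebraic form) yields retractions $\pi_{a}$ that are Nash, not rational, and only on Euclidean neighbourhoods; there is no reason a Zariski-local \emph{regulous} retraction onto $X$ exists, and you give no construction. Composing a rational $f$ with a Nash map produces a Nash function, so $f\circ\pi_{a}$ leaves the regulous category at the very first step --- your gluing can only ever output a continuous semialgebraic extension, not an element of $\mathcal{R}^{0}(\mathbb{R}^{N})$. The partition of unity is equally obstructed: a regulous function on $\mathbb{R}^{N}$ has constructibly closed zero set, so if $\theta_{i}$ vanishes on a nonempty Euclidean open set it vanishes identically; hence there are no regulous bump functions subordinate to a cover in the usual sense, and Proposition 2.8 cannot manufacture them. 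The cited proof in [4] avoids both devices entirely: it extends $f=p/q$ naively off $\mathcal{Z}(q)\cap X$, restricts $f$ to that smaller set using hereditary rationality, extends by induction on dimension, and corrects the difference with a \L{}ojasiewicz-type estimate against powers of $q$ and of equations of $X$. To repair your proposal you would essentially have to abandon the retraction/partition-of-unity strategy and adopt this inductive algebraic one.
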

\begin{proof}
cf. [4, Proposition 8 and Theorem 10].
\end{proof}
\noindent Therefore the regulous functions on $X$ are precisely the restrictions of the regulous functions defined on the ambient space $\mathbb{R}^{N}$. 
\begin{defn}
Let $X \subset \mathbb{R}^{N}$ be an irreducible, smooth, real algebraic set. We define the $k\textrm{\hyp{}}$regulous topology on $X$ as a topology induced by the family of $k\textrm{\hyp{}}$regulous closed sets, i.e. the sets of the form $\mathcal{Z}(F)$, where $F\subseteq \mathcal{R}^{k}\left( X\right)$.  
\end{defn}
As an immediate consequence of the Theorem 4.5 we obtain the following result. 
\begin{prop}
The $k\textrm{\hyp{}}$regulous topology on an irreducible, smooth, real algebraic set $X \subset \mathbb{R}^{N}$ is noetherian for all $k\in\mathbb{N}$.
\end{prop}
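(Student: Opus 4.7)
The plan is to reduce noetherianity of the $k\textrm{\hyp{}}$regulous topology on $X$ to noetherianity of the $0\textrm{\hyp{}}$regulous (equivalently, constructible) topology on the ambient space $\mathbb{R}^{N}$, which was already recalled at the end of Section 2. Since $\mathcal{R}^{k}(X)\subseteq\mathcal{R}^{0}(X)$, every $k\textrm{\hyp{}}$regulous closed subset of $X$ is in particular a $0\textrm{\hyp{}}$regulous closed subset, so it suffices to prove that the $0\textrm{\hyp{}}$regulous topology on $X$ is noetherian.

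The key step uses Theorem 4.5 to identify $\mathcal{R}^{0}(X)$ with $\mathcal{R}^{0}(\mathbb{R}^{N})/\mathcal{I}(X)$: every $f\in\mathcal{R}^{0}(X)$ admits a lift $\widetilde{f}\in\mathcal{R}^{0}(\mathbb{R}^{N})$ such that $f=\widetilde{f}\mid_{X}$, and consequently
$$\mathcal{Z}_{X}(f)=X\cap\mathcal{Z}_{\mathbb{R}^{N}}(\widetilde{f}).$$
Taking arbitrary intersections, every $0\textrm{\hyp{}}$regulous closed subset $F\subseteq X$ is of the form $F=X\cap C$ for some $0\textrm{\hyp{}}$regulous closed set $C\subseteq\mathbb{R}^{N}$. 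In other words, the $0\textrm{\hyp{}}$regulous topology on $X$ is no finer than the subspace topology induced by the $0\textrm{\hyp{}}$regulous topology on $\mathbb{R}^{N}$ (and in fact coincides with it, although one inclusion is enough).

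To conclude, given a descending chain $F_{1}\supseteq F_{2}\supseteq\cdots$ of $0\textrm{\hyp{}}$regulous closed sets in $X$, I would write each $F_{j}=X\cap C_{j}$ with $C_{j}$ $0\textrm{\hyp{}}$regulous closed in $\mathbb{R}^{N}$, then replace $C_{j}$ by $C_{1}\cap\cdots\cap C_{j}$ to obtain a genuinely descending chain in $\mathbb{R}^{N}$ still satisfying $F_{j}=X\cap C_{j}$. Noetherianity of the $0\textrm{\hyp{}}$regulous topology on $\mathbb{R}^{N}$ forces $(C_{j})$ to stabilize, and therefore so does $(F_{j})$. The argument is essentially a formal consequence of the extension-of-functions property from Theorem 4.5 combined with the noetherianity recalled in Section 2, and I do not expect any serious obstacle; the only point to be careful about is that one should argue via the $0\textrm{\hyp{}}$regulous case rather than trying to lift arbitrary $k\textrm{\hyp{}}$regulous functions from $X$ to $\mathbb{R}^{N}$, for which the corresponding extension theorem was not stated.
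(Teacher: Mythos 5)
Your proposal is correct and follows essentially the same route as the paper: reduce to $k=0$ via the inclusion $\mathcal{R}^{k}(X)\subseteq\mathcal{R}^{0}(X)$, lift functions through Theorem 4.5 to realize every regulous closed subset of $X$ as the trace $X\cap C$ of a regulous closed subset $C$ of $\mathbb{R}^{N}$, and then invoke noetherianity of the regulous topology on $\mathbb{R}^{N}$. If anything, your explicit handling of the descending chain (replacing $C_{j}$ by $C_{1}\cap\cdots\cap C_{j}$) spells out the final step more completely than the paper, which stops at showing each closed set is cut out by finitely many functions.
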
 
\begin{proof}
Because any chain of $k\textrm{\hyp{}}$regulous closed sets is also a chain of regulous closed sets, it is enaugh to prove the proposition for $k=0$. Let $V\subset X$ be regulous closed. Then $V=\mathcal{Z}(F)$ for some $F\subset\mathcal{R}^{k}\left( X\right)$. Define $\tilde{F}:=\lbrace \tilde{f} \in \mathcal{R}^{0}\left( \mathbb{R}^{N}\right): \tilde{f}|_{X} \in F \rbrace $ and take $W:=\mathcal{Z}(\tilde{F})$ a regulous closed set in $\mathbb{R}^{N}$. By Theorem 4.5, $V=W\cap X$, and by noetherianity of regulous topology in $\mathbb{R}^{N}$ we know that $W=\mathcal{Z}(\tilde{f}_{1},...,\tilde{f}_{m})$ for a finite tuple $\tilde{f}_{1},...,\tilde{f}_{m}\in \tilde{F}$. Let $f_{i}:=\tilde{f}_{i}\mid _{X} \in F$, for $i=1,...,m$. We conclude that $V=\mathcal{Z}(f_{1},...,f_{m})$.
\end{proof}

Noetherianity of $k\textrm{\hyp{}}$regulous topology allows us to obtain the weak Nullstellensatz for the ring of $k\textrm{\hyp{}}$regulous functions on a smooth, real affine algebraic variety. 

\begin{prop}[Weak Nullstellensatz]
Let $X$ be an irreducible, smooth, real algebraic set $X \subset \mathbb{R}^{N}$, $k\in \mathbb{N}$. Let $I\subseteq\mathcal{R}^{k}\left(X\right)$ be an ideal. Then $\mathcal{Z}(I)=\emptyset$ if and only if $I=\mathcal{R}^{k}\left( X\right)$.
\end{prop}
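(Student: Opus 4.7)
The plan is to imitate the proof of Proposition 3.12 (the weak Nullstellensatz on $\mathbb{R}^N$), using the noetherianity of the $k$-regulous topology on $X$ just established in Proposition 4.7 in place of the noetherianity on $\mathbb{R}^{N}$.

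First, I would invoke Proposition 4.7 to write $\mathcal{Z}(I)=\mathcal{Z}(f_{1})\cap\cdots\cap\mathcal{Z}(f_{m})$ for some finite tuple $f_{1},\dots,f_{m}\in I$; the standard argument (a strictly descending chain among the $\mathcal{Z}(f_{1})\cap\cdots\cap\mathcal{Z}(f_{j})$ would contradict noetherianity) gives such a finite subset of $I$ cutting out $\mathcal{Z}(I)$. Then I would set $f:=f_{1}^{2}+\cdots+f_{m}^{2}\in I$, noting that since $\mathbb{R}$ is formally real one has $\mathcal{Z}(f)=\mathcal{Z}(f_{1})\cap\cdots\cap\mathcal{Z}(f_{m})=\mathcal{Z}(I)$.

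The nontrivial direction is: assume $\mathcal{Z}(I)=\emptyset$. Then $f$ is nowhere zero on $X$, so $1/f$ makes sense pointwise on $X$ and is of class $\mathcal{C}^{k}$ there (as $f$ is $\mathcal{C}^{k}$ and has no zeros). Moreover, since $X$ is irreducible, the rational functions on $X$ form a field; $f$ being $k$-regulous is in particular rational on $X$, and therefore so is its multiplicative inverse $1/f$. Consequently $1/f\in\mathcal{R}^{k}(X)$, so $f$ is a unit in $\mathcal{R}^{k}(X)$ and $1=f\cdot(1/f)\in I$, giving $I=\mathcal{R}^{k}(X)$. The converse implication is trivial.

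The only point that requires a moment of care (and could be the minor obstacle) is verifying that $1/f$ really is $k$-regulous on $X$: one must know that a nowhere-vanishing $k$-regulous function has a $k$-regulous reciprocal. This uses irreducibility of $X$ (to get a rational function field in which $1/f$ lives) together with the pointwise observation that $1/f$ is $\mathcal{C}^{k}$ on all of $X$. No appeal to Theorem 4.5 or to the ambient extension is needed; everything happens intrinsically on $X$.
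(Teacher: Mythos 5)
Your proposal is correct and follows essentially the same route as the paper, whose proof of this proposition simply says ``the same as in Proposition 3.12'': finite intersection via noetherianity of the regulous topology on $X$ (Proposition 4.7), the sum of squares $f=f_1^2+\cdots+f_m^2$, and invertibility of the nowhere-vanishing $f$. Your extra paragraph justifying that $1/f$ is $k$-regulous (rational by irreducibility of $X$, and $\mathcal{C}^k$ since $f$ has no zeros) merely fills in the step the paper leaves implicit with the phrase ``$f$ is invertible.''
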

\begin{proof}
The same as in the Proposition 3.12.
\end{proof}

\begin{cor}
Let $X$ be an irreducible, smooth, real algebraic set $X \subset \mathbb{R}^{N}$, $k\in \mathbb{N}$. Any maximal ideal $\mathfrak{M}$ in $\mathcal{R}^{k}\left( X\right)$ is of the form $$\mathfrak{M}=\mathfrak{M}_{a}:=\lbrace f\in\mathcal{R}^{k}\left( X\right)\mid f(a)=0 \rbrace$$ for an unique $a\in X$. 
\end{cor}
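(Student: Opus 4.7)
The plan is to mimic verbatim the argument of Corollary 3.17, now that all the ingredients are available in the setting of $X$. First I would invoke the Weak Nullstellensatz (Proposition 4.8): since a maximal ideal $\mathfrak{M}\subseteq\mathcal{R}^{k}(X)$ is in particular proper, Proposition 4.8 forces $\mathcal{Z}(\mathfrak{M})\neq\emptyset$. Pick any $a\in\mathcal{Z}(\mathfrak{M})$; then by the very definition of $\mathfrak{M}_{a}$ one has $\mathfrak{M}\subseteq\mathfrak{M}_{a}$.

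Next, note that $\mathfrak{M}_{a}$ is itself a proper ideal, because it is the kernel of the $\mathbb{R}$-algebra evaluation homomorphism $\mathrm{ev}_{a}:\mathcal{R}^{k}(X)\to\mathbb{R}$, $f\mapsto f(a)$, which is surjective (constants are $k\textrm{\hyp{}}$regulous). Since $\mathfrak{M}$ is maximal and $\mathfrak{M}\subseteq\mathfrak{M}_{a}\subsetneq\mathcal{R}^{k}(X)$, the inclusion must be an equality $\mathfrak{M}=\mathfrak{M}_{a}$.

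For uniqueness, suppose $\mathfrak{M}_{a}=\mathfrak{M}_{b}$ with $a\neq b$ in $X\subseteq\mathbb{R}^{N}$. Then there is an index $i$ with $a_{i}\neq b_{i}$, and the restriction to $X$ of the polynomial $x_{i}-a_{i}\in\mathbb{R}[x_{1},\dots,x_{N}]$ belongs to $\mathcal{R}^{\infty}(X)\subseteq\mathcal{R}^{k}(X)$; it lies in $\mathfrak{M}_{a}$ but not in $\mathfrak{M}_{b}$, a contradiction. Hence the point $a$ associated to $\mathfrak{M}$ is unique.

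No step looks genuinely difficult here: the only delicate ingredient is the Weak Nullstellensatz, and that has already been established in Proposition 4.8 using Theorem 4.5 to reduce to the ambient case $\mathbb{R}^{N}$. Everything else is formal, so the proof will be essentially a reference back to Proposition 4.8 together with the evaluation-map argument sketched above.
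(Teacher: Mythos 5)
Your proposal is correct and follows exactly the paper's route: the paper's proof of this corollary is just ``the same as in Corollary 3.17,'' i.e.\ the weak Nullstellensatz (here Proposition 4.8) gives a point $a\in\mathcal{Z}(\mathfrak{M})$, whence $\mathfrak{M}\subseteq\mathfrak{M}_{a}$ and maximality forces equality. Your added remarks on the properness of $\mathfrak{M}_{a}$ and the uniqueness of $a$ via coordinate functions are correct details that the paper leaves implicit.
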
  
\begin{proof}
The same as in the Corollary 3.17.
\end{proof}

We can now generalize the Theorem 4.3 to the case of smooth, real affine algebraic varieties. 

\begin{thm}
Let $k\in\mathbb{N}$. The maximal ideals of the ring of $k\textrm{\hyp{}}$regulous functions $\mathcal{R}^{k}\left( X\right)$ on a smooth, real affine algebraic variety $X \subset \mathbb{R}^{N}$ of dimension $d$, are not finitely generated for $d\geq2$. In particular the ring $\mathcal{R}^{k}\left( X\right)$ is not noetherian, provided that $\mathrm{dim}X\geq 2$.
\end{thm}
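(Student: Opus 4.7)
The plan is to reduce to a local analysis at $a$ that mirrors the proof of Theorem 4.3, using smoothness of $X$ to produce intrinsic analogues of the ambient coordinates. By Corollary 4.9 we may assume the maximal ideal is $\mathfrak{M}_a$ for some $a\in X$. Smoothness at $a$, together with a real algebraic change of coordinates in the ambient $\mathbb{R}^N$, lets us assume $a=O$ and $T_O X=\mathbb{R}^d\times\{0\}^{N-d}$. By the real algebraic implicit function theorem, a Zariski open neighbourhood of $O$ in $X$ is then the graph of regular functions $x_j=\phi_j(x_1,\ldots,x_d)$, $j=d+1,\ldots,N$, each vanishing to order at least $2$ at $O$; in particular $x_1,\ldots,x_d$ restrict to a system of regular parameters for $X$ at $O$.

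Assume for contradiction that $\mathfrak{M}_O=(\widetilde{f}_1,\ldots,\widetilde{f}_r)$. Fix positive reals $a_1,\ldots,a_d>0$, to be specified later, and put
$$f:=\restr{\left(\dfrac{x_1^{k+3}}{a_1 x_1^2+\cdots+a_d x_d^2}\right)}{X}\in\mathfrak{M}_O.$$
This is $k\textrm{\hyp{}}$regulous on $X$: the bracketed expression is $k\textrm{\hyp{}}$regulous on $\mathbb{R}^N$, being the composition (Remark 2.7) of the projection $\mathbb{R}^N\to\mathbb{R}^d$ with the Example 2.4 function. Write $f=\sum_i \widetilde{g}_i\widetilde{f}_i$ with $\widetilde{g}_i\in\mathcal{R}^k(X)$, and decompose $\widetilde{f}_i=f_i+F_i$, $\widetilde{g}_i=g_i+G_i$ with $f_i,g_i$ $k\textrm{\hyp{}}$flat at $O$ and $F_i,G_i$ restrictions of polynomials in $\mathbb{R}[x_1,\ldots,x_N]$, by the analogue of Lemma 4.2(ii) on $X$: the $k$-th order Taylor polynomial of a regulous function at the smooth point $O$ is realisable as the restriction of a polynomial in the ambient variables via the graph description.

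Next, restrict everything to a generic algebraic curve $\gamma\subset X$ through $O$, parametrised by
$$\gamma(t)=\bigl(t,\alpha_1 t,\ldots,\alpha_{d-1}t,\phi_{d+1}(t,\alpha_1 t,\ldots),\ldots,\phi_N(t,\alpha_1 t,\ldots)\bigr)$$
for generic $(\alpha_1,\ldots,\alpha_{d-1})$. Since each $\phi_j$ is $O(t^2)$ along $\gamma$, the leading-order bookkeeping of Theorem 4.3 transports intact: $f(\gamma(t))=t^{k+1}/(a_1+a_2\alpha_1^2+\cdots+a_d\alpha_{d-1}^2)$, the $f_i$ contribute only through the initial forms $p_{i,n_i}, q_{i,m_i}$ of their rational representations on $\mathbb{R}^N$ evaluated on the slice $\{x_{d+1}=\cdots=x_N=0\}$, and the $g_i F_i$, $G_i F_i$ terms behave exactly as in Theorem 4.3. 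Dividing by $t^{k+1}$, letting $t\to 0$, and reproducing the divisibility argument forces $\prod_i q_{i,m_i}(x_1,\ldots,x_d,0,\ldots,0)$ to be divisible by $a_1 x_1^2+\cdots+a_d x_d^2$ in $\mathbb{R}[x_1,\ldots,x_d]$. Choosing the $a_i$ in advance so that this divisibility fails (possible since only finitely many $q_{i,m_i}$ appear) yields the contradiction.

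I expect the main technical obstacle to be the Lemma 4.2(ii)-type decomposition $\widetilde{f}_i=f_i+F_i$ carried out intrinsically on $X$, together with controlling the $O(t^2)$ corrections from the graph functions $\phi_j$ along $\gamma$ so that no leading-order term in the Theorem 4.3 calculation is disturbed. Both reduce to careful bookkeeping enabled by the fact that $x_1,\ldots,x_d$ are regular parameters for $X$ at $O$.
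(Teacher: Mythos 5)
Your overall strategy is the paper's: reduce to $\mathfrak{M}_O$ with $T_OX=\mathbb{R}^d\times\{0\}^{N-d}$, present $X$ near $O$ as a graph over the tangent space with corrections vanishing to order $\geq 2$, and rerun the Theorem 4.3 computation along generic lines in the $d$ tangential directions. However, there are two problems, one of which is a genuine gap. First, the minor one: the "real algebraic implicit function theorem" does not give you a \emph{Zariski} open neighbourhood of $O$ in $X$ that is a graph of \emph{regular} functions $\phi_j$ (already for the circle at $(0,1)$ the graph function $\sqrt{1-x^2}$ is not rational). What is true, and what the paper uses, is that the orthogonal projection to $T_O(X)$ is a Nash diffeomorphism on a Euclidean neighbourhood, so the $\phi_j$ are Nash (analytic) functions with vanishing differential at $O$; your arcs $\gamma$ are then Nash arcs defined for small $t$, not algebraic curves. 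This is fixable and does not affect the limits as $t\to 0$.

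The substantive gap is in your claim that the $f_i$ "contribute only through the initial forms $p_{i,n_i},q_{i,m_i}$ of their rational representations on $\mathbb{R}^N$ evaluated on the slice $\{x_{d+1}=\cdots=x_N=0\}$." The restriction of the ambient initial form to the tangent slice can vanish identically (e.g.\ $q_{i,m_i}=x_N^2$ while $T_OX=\mathbb{R}^d\times\{0\}^{N-d}$), in which case the leading behaviour of $q_i$ along $\gamma$ is \emph{not} $t^{m_i}q_{i,m_i}(1,\alpha,0,\dots,0)$ but is governed by higher homogeneous components of $q_i$ mixed with the quadratic corrections $\phi_j$. Your limit computation and the final divisibility statement then involve a polynomial that may be identically zero, so no contradiction results, and the choice of $a_1,\dots,a_d$ "so that divisibility fails" cannot even be made. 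The paper's proof avoids exactly this by working with $\hat p_i,\hat q_i$ defined as the initial forms of the Taylor expansions at $O$ of the \emph{composites} $p_i\circ\mathrm{pr}^{-1}\circ\iota$ and $q_i\circ\mathrm{pr}^{-1}\circ\iota$ (which are nonzero analytic germs since $q_i$ does not vanish identically on $X$ near $O$), proves the flatness inequality $\deg\hat p_i>\deg\hat q_i+k$ for these intrinsic initial forms, and chooses the $a_j$ so that $a_1x_1^2+\cdots+a_dx_d^2$ divides none of the $\hat q_i$. You need to replace your slice-restricted initial forms by these composite initial forms throughout for the leading-order bookkeeping to be valid.
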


\begin{proof}
Assume that $X \subset \mathbb{R}^{N} $ is an irreducible, smooth, real algebraic set of dimension $d\geq2$ and let $\mathfrak{M}$ be a maximal ideal of $\mathcal{R}^{k}\left( X\right)$. By Corollary 4.9, $\mathfrak{M}$ is of the form $\mathfrak{M}=\mathfrak{M}_{a}$ for a point $a\in X$. Clearly, we may assume that $a=O$. Denote by $T_{O}(X)$ the Zariski tangent space of $X$ at $O$ (cf. [1, Definition 3.3.3]). Since $X$ is smooth, w.l.o.g. we may assume that $T_{O}(X)= \mathbb{R}^{d}\times {\lbrace0\rbrace}^{N-d}$. \newline \indent Denote by $\mathrm{pr}:\mathbb{R}^{N}\rightarrow T_{O}(X)$ the orthogonal projection. From [1, Propositions 3.3.10, 2.9.7 and 8.1.8] there exists an euclidean open neighbourhood $U$ of $O$ in $\mathbb{R}^{N}$ such that $\mathrm{pr}|_{X \cap U}$ is a Nash (analytic) diffeomorphism onto its image $\mathrm{pr}|_{X \cap U}(X \cap U)\subset T_{O}(X)$, where $\mathrm{pr}(O)=O$. Let $V\subset \mathbb{R}^{d}$ be an open neighbourhood of $O$ such that $\iota (V)= \mathrm{pr}|_{X \cap U}(X \cap U)$, where $\iota :\mathbb{R}^{d}\rightarrow \mathbb{R}^{N}$ is the natural embedding. Write $\mathrm{pr}^{-1}\circ \iota:V \rightarrow X \cap U $ as $$(x_{1},...,x_{d})\mapsto (x_{1},...,x_{d},N_{1}(x_{1},...,x_{d}),...,N_{N-d}(x_{1},...,x_{d}))$$ where $N_{j}$ are analytic functions (precisely the Nash functions) such that $\dfrac{\partial N_{j}}{\partial x_{i}}(O)=0$, for $j=1,...,N-d$ and $i=1,...,d$. 
\newline \indent Consider a $k\textrm{\hyp{}}$regulous function $f \in \mathcal{R}^{k}\left( X\right)$. Write $f=\dfrac{p}{q}$, where $p,q \in \mathbb{R}[x_{1},...,x_{N}]$ with $q\neq 0$ on $\dom{(f)}$. Then $p\circ \mathrm{pr}^{-1}\circ \iota$ and $q\circ \mathrm{pr}^{-1}\circ \iota$ are analytic functions on $V$, and $q\circ \mathrm{pr}^{-1}\circ \iota$ is nonvanishing everywhere on $V$. Let $\hat{p}, \hat{q} \in \mathbb{R}[x_{1},...,x_{d}]$ be the initial forms of the Taylor series expansion around the origin $O \in V$ of $p\circ \mathrm{pr}^{-1}\circ \iota$ and $q\circ \mathrm{pr}^{-1}\circ \iota$ respectively. \newline Assume that $f\circ \mathrm{pr}^{-1}\circ \iota$ is $k\textrm{\hyp{}}$flat at $O$. We claim that $\deg\hat{p}>\deg\hat{q}+k$. \newline Take a parametrized line $l$ given by $\left(\alpha_{1}t, \alpha_{2}t,..., \alpha_{d}t\right)$ such that $\hat{q}$ does not vanish on $l$. Then $\restr{f\circ \mathrm{pr}^{-1}\circ \iota}{l\cap V}$ is also $k\textrm{\hyp{}}$flat at $0$. Moreover, just as in the Lemma 4.2, observe that $f(O)$ depends only on $t^{\deg\hat{p}-\deg\hat{q}}\dfrac{\hat{p}\left(\alpha_{1}, \alpha_{2},..., \alpha_{d}\right)}{\hat{q}\left(\alpha_{1}, \alpha_{2},..., \alpha_{d}\right)}$ as $t\rightarrow 0$. But on the generic line $l$ we get that $\deg\hat{p}=\deg(\restr{\hat{p}}{l})>\deg(\restr{\hat{q}}{l})+k=\deg\hat{q}+k$, therefore we have concluded the above claim.   
\newline \indent Suppose that $\mathfrak{M}_{O}$ is finitely generated by $\widetilde{f}_{1},...,\widetilde{f}_{r}\in\mathfrak{M}_{O}$. Write $\widetilde{f}_{i}=f_{i}+F_{i}$, where $f_{i}=\widetilde{f}_{i}-F_{i}$ and $F_{i}=T_{O}^{k}(\widetilde{f}_{i}\circ \mathrm{pr}^{-1}\circ \iota )\in \mathbb{R}[x_{1},...,x_{d}]$ is the Taylor polynomial of degree $k$ at $O$ of $\widetilde{f}_{i}\circ \mathrm{pr}^{-1}\circ \iota$ which is of class $\mathcal{C}^{k}$ near $O$ in $\mathbb{R}^{d}$, with $\textrm{ord}_{O}F_{i}\geq 1$. Then $f_{i}\in \mathcal{R}^{k}\left( X\right)$ and $f_{i}\circ \mathrm{pr}^{-1}\circ \iota$ is $k\textrm{\hyp{}}$flat at $O$. Define a $k\textrm{\hyp{}}$regulous (also on $\mathbb{R}^{N}$) function $f$ on $X$ by putting $$f\left( x_{1}, x_{2}, ...,x_{N} \right) = \dfrac{x_{1}^{3+k}}{a_{1}x_{1}^{2}+a_{2}x_{2}^{2}+...+a_{N}x_{N}^{2}}$$ away from the origin, where we choose $a_{1}, a_{2},..., a_{N}$ to be positive real numbers such that none of $\hat{q}_{i}$ is divisible by $a_{1}x_{1}^{2}+a_{2}x_{2}^{2}+...+a_{d}x_{d}^{2}$, where $\hat{p}_{i}$, $\hat{q}_{i}$ are the initial forms of the Taylor series expansion around the origin of $p_{i}\circ \mathrm{pr}^{-1}\circ \iota$ and $q_{i}\circ \mathrm{pr}^{-1}\circ \iota $ respectively, where $f_{i}=\dfrac{p_{i}}{q_{i}}$, $p_{i},q_{i} \in \mathbb{R}[x_{1},...,x_{N}]$ for $i=1,...,r$. Clearly $f$ is $k\textrm{\hyp{}}$flat at $O$. There exists $\widetilde{g}_{1},...,\widetilde{g}_{r}\in\mathcal{R}^{k}\left( X\right)$ such that 
\begin{equation}
f=\widetilde{g}_{1}\widetilde{f}_{1}+\widetilde{g}_{2}\widetilde{f}_{2}+...+\widetilde{g}_{r}\widetilde{f}_{r}
\end{equation}   
 Write $\widetilde{g}_{i}=g_{i}+G_{i}$ with $G_{i}=T_{O}^{k}(\widetilde{g}_{i}\circ \mathrm{pr}^{-1}\circ \iota )\in \mathbb{R}[x_{1},...,x_{d}]$ and $g_{i}\in \mathcal{R}^{k}\left( X\right)$ such that $g_{i}\circ \mathrm{pr}^{-1}\circ \iota$ is $k\textrm{\hyp{}}$flat at $O$ in $\mathbb{R}^{d}$.
 \newline By (4.11) we know that $$f=\sum_{i\leq r}g_{i}f_{i}+\sum_{i\leq r}g_{i}F_{i}+\sum_{i\leq r}G_{i}f_{i}+\sum_{i\leq r}G_{i}F_{i}$$ Note that $ \displaystyle \ord_{O}\sum_{i\leq r}G_{i}F_{i}\geq k+1$. Let us now consider a generic line $\textit{l}$ in $ \mathbb{R}^{d}$ parametrized by $t\mapsto\left(\alpha_{1}t, \alpha_{2}t,..., \alpha_{d}t\right)$. Because of $k\textrm{\hyp{}}$flatness of $f_{i}\circ \mathrm{pr}^{-1}\circ \iota$, $g_{i}\circ \mathrm{pr}^{-1}\circ \iota$ and $\textrm{ord}_{O}F_{i}\geq 1$, we know that both of the following limits vanish
 $$
\lim_{t\rightarrow 0}\dfrac{g_{i}f_{i}\circ \mathrm{pr}^{-1}\circ \iota\left(\alpha_{1}t,..., \alpha_{d}t\right)}{t^{k+1}}=0$$ $$\lim_{t\rightarrow 0}\dfrac{g_{i}F_{i}\left(\alpha_{1}t,..., \alpha_{d}t\right)}{t^{k+1}}=0
 $$ Next, observe that $$\lim_{t\rightarrow 0}\dfrac{f_{i}\circ \mathrm{pr}^{-1}\circ \iota\left(\alpha_{1}t, \alpha_{2}t,..., \alpha_{d}t\right)}{t^{k+1}}=\dfrac{\hat{p}_{i}\left(\alpha_{1}, \alpha_{2},..., \alpha_{d}\right)}{\hat{q}_{i}\left(\alpha_{1}, \alpha_{2},..., \alpha_{d}\right)}$$ if $\deg\hat{p}_{i}=\deg\hat{q}_{i}+k+1$, or the above limit is $0$ if $\deg\hat{p}_{i}>\deg\hat{q}_{i}+k+1$ for $i=1,...,r$.
Let $\displaystyle H=\sum_{i\leq r}G_{i}F_{i}$. We have already noticed that  $\displaystyle \textrm{ord}_{O}H\geq k+1$, thus $$\lim_{t\rightarrow 0}\dfrac{H\left(\alpha_{1}t, \alpha_{2}t,..., \alpha_{d}t\right)}{t^{k+1}}=h\left(\alpha_{1}, \alpha_{2}, ..., \alpha_{d}\right)$$ where $h$ is some polynomial in $ \mathbb{R}[x_{1},...,x_{d}]$. Because $\displaystyle \lim_{t\rightarrow 0}\dfrac{N_{j}\left(\alpha_{1}t, \alpha_{2}t,..., \alpha_{d}t\right)}{t}=0$ for $j=1,...,N-d$, we get that \begin{equation}
\lim_{t\rightarrow 0}
\dfrac{f\circ \mathrm{pr}^{-1}\circ \iota\left(\alpha_{1}t, \alpha_{2}t,..., \alpha_{d}t\right)}{t^{k+1}}= \dfrac{\alpha^{3+k}_{1}}{a_{1}\alpha_{1}^{2}+a_{2}\alpha_{2}^{2}+...+a_{d}\alpha_{d}^{2}}
\end{equation} Let $I\subseteq \lbrace 1,...,r \rbrace$ be the set of indices $i\leq r$ for which $\deg\hat{p}_{i}=\deg\hat{q}_{i}+k+1$. We obtain that (4.12) is equal to $$\sum_{i\in I}G_{i}(O)\dfrac{\hat{p}_{i}(\alpha_{1}, \alpha_{2},..., \alpha_{d})}{\hat{q}_{i}(\alpha_{1}, \alpha_{2},..., \alpha_{d})}+h\left(\alpha_{1}, \alpha_{2}, ..., \alpha_{d}\right)$$ 

\noindent Then, after routine calculation, just like in the affine case, we finally get that 
$$a_{1}x_{1}^{2}+a_{2}x_{2}^{2}+...+a_{d}x_{d}^{2}|x_{1}^{3+k}\prod_{i\leq r}\hat{q}_{i}(x_{1},...,x_{d})$$ in the ring $\mathbb{R}[x_{1},...,x_{d}]$ - which contradicts our assumptions.
\end{proof}

\subsection*{Acknowledgments}  It is a pleasure to thank my advisor, Professor Krzysztof Nowak, for suggesting me the topic of the work, especially pointing out problems and influential discussions.

\textsc{\small Instytut Matematyki, Wydzia\l~ Matematyki i Informatyki, Uniwersytet Jagiello\'nski, ul. \L{}ojasiewicza 6, 30-348 Krak\'ow, Poland}\\

\textit{E-mail address}, A.~Czarnecki: \texttt{aleksander.czarnecki@doctoral.uj.edu.pl}
\end{document}